\documentclass[12pt]{amsart}
\usepackage{times}
\usepackage{amsfonts}
\usepackage{amssymb}
\usepackage{bbm}
\usepackage{times}
\usepackage{amssymb}
\usepackage{amscd}
\usepackage{graphicx}

\usepackage{amsmath}
\usepackage{amssymb}

\usepackage{amsmath}
\usepackage{amsfonts}
\usepackage{amscd}
\usepackage{latexsym}
\usepackage{amsthm}
\usepackage{amssymb}
\usepackage{amsmath}

\theoremstyle{plain}
\newtheorem{theorem}{Theorem}[section]

\newtheorem{lemma}[theorem]{Lemma}

\newtheorem{definition}[theorem]{Definition}

\begin{document}

\vskip 0.5cm

\title[asymptotically tracially  approximation ${\rm C^*}$-algebras]{{\bf Divisible properties  for  asymptotically tracially  approximation ${\rm C^*}$-algebras}}
\author{Qingzhai Fan and  Jiahui Wang}
\address{ Qingzhai Fan\\ Department of Mathematics\\ Shanghai Maritime University\\
Shanghai\\China
\\  201306 }
\email{qzfan@shmtu.edu.cn}

\address{Jiahui  Wang\\ Department of Mathematics\\ Shanghai Maritime University\\
Shanghai\\China
\\  201306 }
\email{641183019@qq.com}

\thanks{{\bf Key words} ${\rm C^*}$-algebras, asymptotically tracially  approximation,  Cuntz semigroup.}

\thanks{2000 \emph{Mathematics Subject Classification.}46L35, 46L05, 46L80}

\begin{abstract} We show that  the following divisible  properties  of the
  ${\rm C^*}$-algebras in a class $\mathcal{P}$ are inherited by simple unital ${\rm C^*}$-algebras in the
 class of asymptotically tracially in $\mathcal{P}$: $(1)$ $m$-almost divisible, $(2)$ weakly $(m, n)$-divisible.
 \end{abstract}
\maketitle

\section{Introduction}

 The Elliott program for the classification of amenable
 ${\rm C^*}$-algebras might be said to have begun with the ${\rm K}$-theoretical
 classification of AF algebras in \cite{E1}. Since then, many classes of
 ${\rm C^*}$-algebras have been classified by the Elliott invariant.
  Lin introduced the concepts of tracial topological rank (\cite{L0} and \cite{L1}).
  Instead of assuming inductive limit structure, he started with a
  certain abstract tracial approximation property.
  This abstract tracial approximation property  has proved to be very important in the classification of simple amenable ${\rm C^*}$-algebras. For example,
   it led to the classification of unital simple separable amenable ${\rm C^*}$-algebras with finite nuclear dimension in the UCT class (see \cite{G4}, \cite{G5} \cite{EZ5}, \cite{TWW1}).

   In the quest to classify simple separable nuclear  ${\rm C^*}$-algebras, as suggested by G. A. Elliott, it has become necessary to invoke some regularity
 property of the  ${\rm C^*}$-algebras. There are three regularity properties of particular interest: tensorial absorption of the Jiang-Su algebra $\mathcal{Z}$, also called $\mathcal{Z}$-stability;  finite nuclear dimension; and strict comparison of positive elements.
 The latter can be reformulated as an algebraic property of Cuntz semigroup, called almost unperforated. Winter and Toms have conjectured that these three fundamental properties are equivalent for all separable, simple, nuclear  ${\rm C^*}$-algebras.

Hirshberg and Oroviz introduced tracially $\mathcal{Z}$-stable in \cite{HO}, they  showed that $\mathcal{Z}$-stable is equivalent to tracially $\mathcal{Z}$-stable for unital simple separable amenable
  ${\rm C^*}$-algebra in \cite{HO}.

Inspired by  Hirshberg and Oroviz's tracially $\mathcal{Z}$-stable, and  inspired by the work of Elliott,  Gong,  Lin, and  Niu in \cite {EGLN2}, \cite{GL2} and \cite{GL3}, and also in order to search a tracial version of Toms-Winter conjecture, Fu and Lin  introduced  a class of tracial nuclear dimensional ${\rm C^*}$-algebras in \cite{FL}. Also in \cite{FL}, Fu and Lin introduced  a class of asymptotically tracially  ${\rm C^*}$-algebra. They show that a unital separable simple  ${\rm C^*}$-algebra $A$ has tracial nuclear dimensional no more than $k$  is equivalent to the statement that A is asymptotically
tracially in ${{\mathcal{F}_n}}$, where $\mathcal{F}_n$ be the class of all $\rm {C^*}$-algebras with nuclear dimension at most $n$.

In \cite{FL}, Fu and Lin show that  the following  properties  of unital
${\rm C^*}$-algebras in a class $\mathcal{P}$ are inherited by simple unital ${\rm C^*}$-algebras of asymptotically
tracially in $\mathcal{P}$:

$(1)$ stably finite,

$(2)$ quasidiagonal ${\rm C^*}$-algebras,

$(3)$ purely infinite simple ${\rm C^*}$-algebras,

$(4)$ the Cuntz semigroup is almost unperforated, and

$(5)$ strict comparison property.

In \cite{FF}, Fan and Fang show that certain comparison  properties  of unital
${\rm C^*}$-algebras in a class $\mathcal{P}$ are inherited by simple unital ${\rm C^*}$-algebras of asymptotically
tracially in $\mathcal{P}$.

The property  of $m$-almost divisible  was  introduced by Robert and Tikuisis in \cite{RT}.
The property  of weakly $(m, n)$-divisible was introduced by Robert and  R{\o}rdam in \cite{KM}.

In this paper, we show that  the following divisible   properties  of unital
${\rm C^*}$-algebras in a class $\mathcal{P}$ are inherited by simple unital ${\rm C^*}$-algebras in  the class of asymptotically tracially in $\mathcal{P}$:

$(1)$ $m$-almost divisible,

$(2)$ weakly $(m, n)$-divisible.

\section{Definitions and preliminaries}

Let $A$ be a  ${\rm C^*}$-algebra. For two positive elements $a,~b$ in $A$
we say that $a$ is Cuntz subequivalent to $b$ and write $a\precsim b$ if there are
$(r_n)_{n=1}^\infty \subseteq A$ such that $\lim_{n\to \infty}\|r_nbr_n^*-a\|=0.$
We say that $a$ and $b$ are Cuntz equivalent (written $a\sim b$) if $a\precsim b$ and $b\precsim a$. We write $\langle a\rangle$ for the equivalence class of $a$.
(Cuntz equivalent was first introduced Cuntz in   \cite{CJ}).

For a ${\rm C^*}$-algebra $A$, define ${\rm M}_{\infty}(A)=\bigcup_{n\in {\mathbb{N}}}{\rm M}_n(A)$, and for $a\in {\rm M}_n(A)_+$ and $b\in{\rm M}_m(A)_+$, define $a\oplus b:=diag(a, b)\in {\rm M}_{n+m}(A)_+$.
Suppose $a, b\in {\rm M}_{\infty}(A)_+$. Then $a\in {\rm M}_n(A)_+$ and $b\in{\rm M}_m(A)_+$ for some integer $n,m$. We say that $a$ is Cuntz subequivalent to $b$ and write $a\precsim b$ if $a\oplus 0_{\max({(m-n)},0)}\precsim b\oplus 0_{\max({(n-m)},0)}$ as elements in $ {\rm M}_{\max{(n,m)}}(A)_+$.

Let $A$ be a  unital ${\rm C^*}$-algebra. Recall that a positive element $a\in A$ is called purely positive if $a$ is
not Cuntz equivalent to a projection.

Given $a$ in ${\rm M}_{\infty}(A)_+$ and $\varepsilon>0,$ we denote by $(a-\varepsilon)_+$ the element of ${\rm C^*}(a)$ corresponding (via the functional calculus) to the function $f(t)={\max (0, t-\varepsilon)},  ~ t\in \sigma(a)$. By the functional calculus, it follows in a straightforward manner that $((a-\varepsilon_1)_+-\varepsilon_2)_+=(a-(\varepsilon_1+\varepsilon_2))_+.$

The property  of $m$-almost divisible  was  introduced by Robert and Tikuisis in \cite{RT}.

\begin{definition}(\cite{RT}.)\label{def:2.4} Let $m\in \mathbb{N}$. We say that $A$ is  $m$-almost divisible if for each $a\in {\rm M}_{\infty}(A)_+,$ $k\in \mathbb{N}$ and $\varepsilon>0,$ there exists $ b\in{\rm M}_{\infty}(A)_+$ such that $k\langle b\rangle\leq \langle a\rangle$ and $\langle (a-\varepsilon)_+\rangle\leq (k+1)(m+1)\langle b\rangle$.
\end{definition}

The property  of weakly $(m, n)$-divisible was introduced by  Robert and  R{\o}rdam in \cite{KM}.

\begin{definition}(\cite{KM}.)\label{def:2.4}  Let $A$ be unital ${\rm C^*}$-algebra. Let $m, n\geq 1$ be integers.
 We say $A$ is weakly $(m, n)$-divisible,  if for every $u$ in ${\rm W}(A)$, any $\varepsilon>0$,  there exist elements $x_1,x_2,\cdots, x_n\in {\rm W}(A)$, such that $mx_j\leq u$ for all $j=1,2,\cdots, n$ and $(u-\varepsilon)_+\leq x_1+x_2+\cdots+ x_n$.
\end{definition}

\begin{definition} Let $A$ and $B$ be  ${\rm C^*}$-algebra, let $\varphi:A\to B$
be a map, let $\mathcal{G}\subset A$, and let $\varepsilon>0$. The map $\varphi$ is called $\mathcal{G}$-$\varepsilon$-multiplicative, or called $\varepsilon$-multiplicative on $\mathcal{G}$, if
for any $x,y\in F$, $\|\varphi(xy)-\varphi(x)\varphi(y)\|<\varepsilon$. If, in addition, for any
$x\in \mathcal{G}$, $|\|\varphi(x)\|-\|x\||<\varepsilon$, then we say $\varphi$ is an $\mathcal{G}$-$\varepsilon$-approximate embedding.
\end{definition}

Fu and Lin  introduced a class of asymptotically tracially  approximation ${\rm C^*}$-algebras in \cite{FL}.
 The following  definition is  Definition 3.1 in \cite{FL}.

\begin{definition}(\cite{FL}.)\label{def:1.5} Let $A$ be a unital ${\rm C^*}$-algebra. Let $\mathcal{P}$ be  a class of ${\rm C^*}$-algebra. We say $A$ is  asymptotically tracially
in $\mathcal{P}$,  if for any
 $\varepsilon>0,$ any finite
subset $F\subseteq A,$ and any  non-zero element $a\geq 0,$ there exist
a ${\rm C^*}$-algebra $B$  in $\mathcal{P}$ and completely positive  contractive linear maps  $\alpha:A\to B$ and  $\beta_n: B\to A$, and $\gamma_n:A\to A$ such that

$(1)$ $\|x-\gamma_n(x)-\beta_n(\alpha(x))\|<\varepsilon$ for all $x\in F$ and for all $n\in {\mathbb{N}}$,

$(2)$ $\alpha$ is an $F$-$\varepsilon$ approximate embedding,

$(3)$ $\lim_{n\to \infty}\|\beta_n(xy)-\beta_n(x)\beta_n(y)\|=0$ and $\lim_{n\to \infty}\|\beta_n(x)\|=\|x\|$ for all $x,y\in B$, and

$(4)$ $\gamma_n(1_A)\precsim a$ for all $n\in \mathbb{N}$.
\end{definition}

The following theorem is Proposition 3.8 in \cite{FL}.
\begin{theorem}(\cite{FL}.)  Let $\mathcal{P}$ be a class of ${\rm C^*}$-algebras. Let $A$ be a simple unital
${\rm C^*}$-algebra which is asymptotically tracially in  $\mathcal{P}$. Then the following conditions holds: For any
 $\varepsilon>0,$ any finite
subset $F\subseteq A,$ and any  non-zero element $a\geq 0,$ there exist
a ${\rm C^*}$-algebra $B$  in $\mathcal{P}$ and completely positive  contractive linear maps  $\alpha:A\to B$ and  $\beta_n: B\to A$, and $\gamma_n:A\to A\cap\beta_n(B)^{\perp}$ such that

$(1)$ the map $\alpha$ is unital  completely positive   linear map,  $\beta_n(1_B)$ and $\gamma_n(1_A)$ are projections and  $\beta_n(1_B)+\gamma_n(1_A)=1_A$ for all $n\in \mathbb{N}$,

$(2)$ $\|x-\gamma_n(x)-\beta_n(\alpha(x))\|<\varepsilon$ for all $x\in F$ and for all $n\in {\mathbb{N}}$,

$(3)$ $\alpha$ is an $F$-$\varepsilon$-approximate embedding,

$(4)$ $\lim_{n\to \infty}\|\beta_n(xy)-\beta_n(x)\beta_n(y)\|=0$ and $\lim_{n\to \infty}\|\beta_n(xy)\|=\|x\|$ for all $x,y\in B$, and

$(5)$ $\gamma_n(1_A)\precsim a$ for all $n\in \mathbb{N}$.
\end{theorem}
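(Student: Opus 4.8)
The plan is to start from the ``soft'' decomposition supplied by Definition \ref{def:1.5} and rigidify it inside $A$: I want to replace the positive contraction $\beta_n(1_B)$ by a genuine projection $p_n$, push $\gamma_n$ into the complementary corner $(1_A-p_n)A(1_A-p_n)$, and make $\alpha$ unital, all while keeping the asymptotic multiplicativity and isometry of $\beta_n$ and the comparison $\gamma_n(1_A)\precsim a$. First I would make the usual reductions: enlarge $F$ so that $1_A\in F$, every element of $F$ has norm at most $1$, and $F$ is closed under the finitely many products appearing below; then fix an auxiliary tolerance $\delta\ll\varepsilon$ and apply Definition \ref{def:1.5} with $(\delta,F,a)$ to obtain c.p.c.\ maps $\alpha\colon A\to B$, $\beta_n\colon B\to A$, $\gamma_n\colon A\to A$ with $B\in\mathcal{P}$ satisfying its four conditions with tolerance $\delta$. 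Set $e_n:=\beta_n(1_B)$. Since $\beta_n$ is asymptotically multiplicative, $\|e_n^2-e_n\|=\|\beta_n(1_B)^2-\beta_n(1_B^2)\|\to 0$, so for large $n$ the spectrum of $e_n$ concentrates near $\{0,1\}$ and the spectral projection $p_n:=\chi_{[1/2,1]}(e_n)\in C^*(e_n)\subseteq A$ satisfies $\|p_n-e_n\|\to 0$. Write $q_n:=1_A-p_n$.

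Next I would correct the maps. In the corner $p_nAp_n$ the element $p_ne_np_n$ is positive and close to the unit $p_n$, hence invertible there; set $c_n:=(p_ne_np_n)^{-1/2}$ (inverse in $p_nAp_n$) and define $\beta_n'(b):=c_n\,\beta_n(b)\,c_n$. Then $\beta_n'$ is c.p.c.\ with $\beta_n'(1_B)=c_ne_nc_n=p_n$ a projection and $\beta_n'(B)\subseteq p_nAp_n$; and since $c_n\to p_n$ together with $\beta_n(b)\approx e_n\beta_n(b)e_n\approx p_n\beta_n(b)p_n$ (two uses of asymptotic multiplicativity), the map $\beta_n'$ stays asymptotically multiplicative and asymptotically isometric. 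Define $\gamma_n'(x):=q_nxq_n$; this is c.p.c., $\gamma_n'(1_A)=q_n$ is a projection, $\gamma_n'(A)=q_nAq_n$, and because $q_np_n=0$ we get $\gamma_n'(A)\subseteq\beta_n'(B)^{\perp}$ with $\beta_n'(1_B)+\gamma_n'(1_A)=p_n+q_n=1_A$, which is condition (1). To unitalize $\alpha$, note that $\alpha(1_A)$ is likewise approximately a projection $Q\in B$; replacing $\alpha$ by $x\mapsto(Q\alpha(1_A)Q)^{-1/2}\alpha(x)(Q\alpha(1_A)Q)^{-1/2}$ and $B$ by the corner $QBQ$ makes $\alpha$ unital and keeps it an $F$-$\varepsilon$-approximate embedding; this is the one step that invokes passing to such a corner inside the ambient framework.

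It remains to verify the decomposition and the comparison. For the decomposition I would use the Cauchy--Schwarz inequality for c.p.c.\ maps, $\varphi(x)\varphi(x)^*\le\varphi(xx^*)$: since $\gamma_n(1_A)\approx 1_A-e_n\approx q_n$, this gives $\|(1_A-q_n)\gamma_n(x)\|\le\|x\|\,\|\gamma_n(1_A)-q_n\|^{1/2}\to 0$, so $\gamma_n(x)\approx q_n\gamma_n(x)q_n$; combined with $q_n\gamma_n(x)q_n\approx q_n(x-\beta_n(\alpha(x)))q_n\approx q_nxq_n=\gamma_n'(x)$ (using $\beta_n(\alpha(x))\in p_nAp_n$ asymptotically), one gets $\gamma_n'(x)\approx\gamma_n(x)$. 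Since also $\beta_n'(\alpha(x))\approx p_n\beta_n(\alpha(x))p_n\approx\beta_n(\alpha(x))$, the old relation $x\approx\gamma_n(x)+\beta_n(\alpha(x))$ yields $\|x-\gamma_n'(x)-\beta_n'(\alpha(x))\|<\varepsilon$ for large $n$, which with conditions (3) and (4) follows from the preservation statements above. Finally $\gamma_n'(1_A)=q_n$ is within $\|e_n-p_n\|+\delta$ of $\gamma_n(1_A)$; choosing $\eta$ slightly larger than this distance, the perturbation lemma for Cuntz subequivalence gives $(q_n-\eta)_+\precsim\gamma_n(1_A)$, and as $q_n$ is a projection $(q_n-\eta)_+\sim q_n$, whence $q_n\precsim\gamma_n(1_A)\precsim a$, which is condition (5).

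I expect the main obstacle to be the second step: simultaneously turning the approximate identity $\beta_n(1_B)+\gamma_n(1_A)\approx 1_A$ into an \emph{exact} orthogonal sum of two projections and preserving all four asymptotic conclusions, since the square-root correction $c_n$ must be controlled tightly enough that neither the multiplicativity/isometry of $\beta_n'$ nor the comparison $\gamma_n'(1_A)\precsim a$ degrades as $n\to\infty$, while the unitalization of $\alpha$ forces the accompanying passage to a corner of $B$.
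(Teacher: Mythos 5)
The paper does not actually prove this statement: it is imported verbatim as Proposition 3.8 of \cite{FL}, so there is no in-paper argument to compare yours against. Measured against the statement itself, your overall strategy is the standard (and correct) rigidification: turn the approximate projection into a spectral projection, conjugate $\beta_n$ by an inverse square root so that $\beta_n(1)$ becomes an exact projection, compress onto the complementary corner to define the new $\gamma_n$, and use the perturbation lemma for Cuntz subequivalence to recover condition $(5)$.

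There is, however, a genuine gap coming from the order of your operations. You extract $p_n$ from $e_n=\beta_n(1_B)$, but condition $(1)$ of Definition \ref{def:1.5} decomposes $x$ as $\gamma_n(x)+\beta_n(\alpha(x))$, so what is controlled is $\gamma_n(1_A)\approx 1_A-\beta_n(\alpha(1_A))$, not $1_A-\beta_n(1_B)$. Since $\alpha$ is not unital, $\|1_B-\alpha(1_A)\|$ can be close to $1$, and since $\beta_n$ is asymptotically isometric, $\|\beta_n(1_B)-\beta_n(\alpha(1_A))\|\to\|1_B-\alpha(1_A)\|$ does not become small. Consequently your claims ``$\gamma_n(1_A)\approx 1_A-e_n\approx q_n$'' and ``$q_n$ is within $\|e_n-p_n\|+\delta$ of $\gamma_n(1_A)$'' are false in general, the Cauchy--Schwarz step $\gamma_n(x)\approx q_n\gamma_n(x)q_n$ collapses, and condition $(2)$ of the conclusion is not verified: $q_nxq_n$ loses the part of $\gamma_n(x)$ supported where $\beta_n(1_B)$ acts but $\beta_n(\alpha(1_A))$ does not. (Condition $(5)$ survives, but only through the order inequality $q_n\le 1_A-\beta_n(\alpha(1_A))+\mathrm{small}$, whence $q_n\precsim\gamma_n(1_A)\precsim a$, not through the norm estimate you state.) The repair is to perform your unitalization of $\alpha$ \emph{first}: set $Q=\chi_{[1/2,1]}(\alpha(1_A))$, replace $B$ by $QBQ$, correct $\alpha$ there, restrict $\beta_n$ to $QBQ$, and only then extract $p_n$ from $\beta_n(Q)$. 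This makes unavoidable the issue you flag but do not resolve: the new codomain is a unital hereditary ${\rm C^*}$-subalgebra of $B$, which need not lie in $\mathcal{P}$ unless $\mathcal{P}$ is closed under such corners (harmless in Theorems \ref{thm:3.1} and \ref{thm:3.2}, where Lemma \ref{lem:2.6} is in force, but not granted by the statement as quoted). Note also that your argument nowhere uses the simplicity of $A$, which the statement assumes; its absence is a signal that some degenerate cases handled in \cite{FL} are being passed over.
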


\begin{lemma}(\cite{FL}.)\label{lem:2.6} If the class $\mathcal{P}$ is closed under tensoring with matrix algebras and
 under passing to  unital
hereditary ${\rm C^*}$-subalgebras, then  the class which is asymptotically tracially in $\mathcal{P}$  is closed under
tensoring with matrix algebras  and under passing to unital hereditary ${\rm C^*}$-subalgebras.
\end{lemma}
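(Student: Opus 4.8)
The plan is to establish the two closure properties separately, in each case pushing the approximating system $(B,\alpha,\beta_n,\gamma_n)$ supplied by the hypothesis that $A$ is asymptotically tracially in $\mathcal{P}$ through the relevant construction, and invoking the hypothesis on $\mathcal{P}$ only to keep the new coefficient algebra inside $\mathcal{P}$.

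For tensoring with $M_k$, fix $\varepsilon>0$, a finite set $F\subseteq M_k(A)$ and a nonzero $a\in M_k(A)_+$, and let $F'\subseteq A$ be the finite set of all matrix entries of the elements of $F$. I would apply the definition of asymptotically tracial approximation to $A$ with data $(F',\varepsilon',b)$, where $\varepsilon'$ is a small tolerance and $b\in A_+$ is nonzero with $b\otimes 1_{M_k}\precsim a$ in $M_k(A)$, obtaining $B\in\mathcal{P}$ together with $\alpha,\beta_n,\gamma_n$. Setting $\widetilde B=M_k(B)$, which lies in $\mathcal{P}$ since $\mathcal{P}$ is closed under tensoring with matrix algebras, I amplify all maps by $\mathrm{id}_{M_k}$: $\widetilde\alpha=\alpha\otimes\mathrm{id}_{M_k}$, $\widetilde\beta_n=\beta_n\otimes\mathrm{id}_{M_k}$, $\widetilde\gamma_n=\gamma_n\otimes\mathrm{id}_{M_k}$. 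Conditions $(1)$--$(3)$ then reduce entrywise to the corresponding properties for $A$: the estimate in $(1)$ follows from $\|\sum_{i,j}c_{ij}\otimes e_{ij}\|\le\sum_{i,j}\|c_{ij}\|$ applied to $c_{ij}=x_{ij}-\gamma_n(x_{ij})-\beta_n(\alpha(x_{ij}))$, while the multiplicativity and norm statements for $\widetilde\alpha,\widetilde\beta_n$ follow by expanding matrix products together with the standard fact that an (asymptotic) $\ast$-homomorphism remains (asymptotically) isometric after amplification by $\mathrm{id}_{M_k}$. Condition $(4)$ is where the choice of $b$ is used: $\widetilde\gamma_n(1_{M_k(A)})=\gamma_n(1_A)\otimes 1_{M_k}\precsim b\otimes 1_{M_k}\precsim a$.

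For passing to a unital hereditary subalgebra $H=pAp$ with $p\in A$ a projection, fix $\varepsilon>0$, a finite set $F\subseteq H$ and a nonzero $a\in H_+\subseteq A_+$, and apply the definition to $A$ with the enlarged set $F\cup\{p\}$, a small $\varepsilon'$, and $a$, obtaining $B,\alpha,\beta_n,\gamma_n$. Since $\alpha$ is approximately multiplicative and $p$ is a projection, $\alpha(p)$ is within $\varepsilon'$ of being a projection, so by functional calculus there is a genuine projection $q\in B$ near $\alpha(p)$; put $B'=qBq$, a unital hereditary subalgebra of $B$, hence in $\mathcal{P}$. I then compress: $\alpha'(x)=q\alpha(x)q$, $\beta_n'(y)=p\beta_n(y)p$, $\gamma_n'(x)=p\gamma_n(x)p$ for $x\in H$, $y\in B'$. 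Here condition $(4)$ is immediate and needs no comparison: as $p\le 1_A$ and $\gamma_n$ is positive, $\gamma_n'(1_H)=p\gamma_n(p)p\precsim\gamma_n(p)\le\gamma_n(1_A)\precsim a$. Conditions $(1)$ and $(2)$ follow by compressing the identities for $A$ by $p$ and replacing $\alpha(x)$ by $q\alpha(x)q$, using $\alpha(x)=\alpha(pxp)\approx\alpha(p)\alpha(x)\alpha(p)\approx q\alpha(x)q$ together with the contractivity of $\beta_n$.

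The main obstacle in each half concerns the interaction of the error map $\gamma_n$ with the rest. In the matrix half it is purely the comparison in $(4)$: beyond the trivial single-copy compression $a_{11}\otimes e_{11}=e_{11}ae_{11}\precsim a$ (for a nonzero diagonal entry $a_{11}$ of $a$), one must produce a nonzero $b\in A_+$ with $b\otimes 1_{M_k}\precsim a$, and fitting $k$ mutually orthogonal copies below $a$ genuinely requires a comparison/divisibility input; this is clean when $A$ is simple and infinite-dimensional, by choosing $k$ mutually orthogonal, mutually Cuntz-equivalent nonzero positive elements inside $\overline{a_{11}Aa_{11}}$, whereas the finite-dimensional case (where $M_k(A)$ is again a matrix algebra) is degenerate and handled directly. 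In the hereditary half the delicate point is condition $(3)$ for $\beta_n'$: because $\beta_n(q)\approx p-\gamma_n(p)$ rather than $p$, one must show the cross term $p\beta_n(y_1)(1-p)\beta_n(y_2)p$ tends to $0$ as $n\to\infty$, which becomes transparent exactly when the images of $\beta_n$ and $\gamma_n$ are asymptotically orthogonal, that is, after upgrading to the orthogonal decomposition $\beta_n(1_B)+\gamma_n(1_A)=1_A$ with $\gamma_n$ valued in $\beta_n(B)^{\perp}$ furnished by the preceding theorem; securing and exploiting this orthogonality is the principal technical content.
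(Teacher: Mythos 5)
First, a point of reference: the paper gives no proof of this lemma at all --- it is quoted from \cite{FL} --- so your argument can only be measured against the standard argument there. Your overall strategy (amplify the approximating system $(B,\alpha,\beta_n,\gamma_n)$ by $\mathrm{id}_{M_k}$ for the matrix case; compress by the unit $p$ of the hereditary subalgebra and a projection $q$ near $\alpha(p)$ for the hereditary case) is the right one. You also correctly isolate the two delicate points, and your resolution of the first (producing $b\in A_+$ with $b\otimes 1_k\precsim a$ from $k$ pairwise orthogonal, pairwise equivalent nonzero positive elements of a hereditary subalgebra) is fine for simple non-elementary $A$; note, though, that both this step and your appeal to the orthogonalized approximation of Theorem 2.5 require $A$ simple, a hypothesis absent from the statement of the lemma but present in every use the paper makes of it.

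The genuine gap is in the hereditary half, at exactly the spot you point to but do not actually close: condition $(3)$ of Definition 2.4 for $\beta_n'(y)=p\beta_n(y)p$. Even after the orthogonality upgrade, the cross term $p\beta_n(y_1)(1-p)\beta_n(y_2)p$ satisfies only $\limsup_n\|\cdot\|\le C\varepsilon'\|y_1\|\,\|y_2\|$, not $\lim_n\|\cdot\|=0$: with $e_n=\beta_n(1_B)$ one has $e_n(1-p)e_n=\beta_n(1_B-q)+r_n$, where $\|r_n\|$ is controlled only by the \emph{fixed} tolerances $\|p-\gamma_n(p)-\beta_n(\alpha(p))\|<\varepsilon'$ and $\|q-\alpha(p)\|$, which do not decay in $n$. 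Since the definition demands the exact limit $\lim_n\|\beta_n'(y_1y_2)-\beta_n'(y_1)\beta_n'(y_2)\|=0$, compression by the fixed projection $p$ does not verify it. The standard repair is to compress instead by $p_n=\chi_{[1/2,1]}(\beta_n(q))$, which satisfies $\|p_n-\beta_n(q)\|\to 0$ and makes $y\mapsto p_n\beta_n(y)p_n$ genuinely asymptotically multiplicative and isometric, and then to move $p_n$ underneath $p$ by a unitary $u_n$ with $\|u_n-1\|=O(\varepsilon')$ (using that $p_n+g_n$ is a projection within $O(\varepsilon')$ of $p$, where $g_n$ is a projection near $\gamma_n(p)$), finally setting $\beta_n'(y)=u_np_n\beta_n(y)p_nu_n^*$ and absorbing $u_ng_nu_n^*$ into $\gamma_n'$. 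A smaller gap of the same flavour sits in the matrix half: the approximate-embedding estimate $\bigl|\|(\alpha\otimes\mathrm{id}_{M_k})(x)\|-\|x\|\bigr|<\varepsilon$ does not follow entrywise, because the norm of a matrix is not controlled by the norms of its entries; one must place enough products and powers of the entries of $F$ into $F'$ and use approximate multiplicativity to transfer the norm estimate to the amplification.
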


The following lemma is obvious,  and we omit the proof.
\begin{lemma}\label{lem:2.7} The $m$-almost divisible (or weakly $(m, n)$-divisible) is preserved  under  tensoring with
matrix algebras and  under passing to  unital hereditary ${\rm C^*}$-subalgebras.
\end{lemma}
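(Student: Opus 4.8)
The plan is to notice first that both divisibility notions are formulated purely in terms of the ordered Cuntz semigroup $W(\cdot)$ together with the cut-down operation $\langle a\rangle\mapsto\langle(a-\varepsilon)_+\rangle$, and then to treat the two stability assertions separately, the matrix-algebra case being essentially formal and the hereditary case carrying the real content. For tensoring with matrix algebras, observe that $M_j(M_k(A))=M_{jk}(A)$ and that the multiples of $k$ are cofinal in $\mathbb{N}$, so $M_\infty(M_k(A))=M_\infty(A)$ as $*$-algebras; Cuntz subequivalence and the functional-calculus elements $(a-\varepsilon)_+$ are then computed in the very same algebra. Hence $W(M_k(A))=W(A)$ as ordered semigroups with the same cut-down operation, and both $m$-almost divisibility and weak $(m,n)$-divisibility transfer verbatim. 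I expect no difficulty here.

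For passing to a unital hereditary subalgebra $B=pAp$ (with $p=p^*=p^2\in A$), I would first record that $M_\infty(B)=P\,M_\infty(A)\,P$, where $P=\mathrm{diag}(p,p,\dots)$, is hereditary in $M_\infty(A)$ at each matrix level. From this follow the two facts I need: (i) for $x,y\in M_\infty(B)_+$ one has $x\precsim y$ in $B$ if and only if $x\precsim y$ in $A$; and (ii) (hereditary realization) whenever $y\in M_\infty(A)_+$, $z\in M_\infty(B)_+$ and $y\precsim z$, then for each $\eta>0$ the element $(y-\eta)_+$ is Cuntz equivalent to some element of $M_\infty(B)_+$. I would deduce (ii) from $y\precsim z$ by writing $r z r^*\approx y$, noting $r z r^*\sim z^{1/2}r^*r z^{1/2}\in\overline{z\,M_\infty(A)\,z}\subseteq M_\infty(B)$, and invoking the standard perturbation fact $\|c-d\|<\eta\Rightarrow(c-\eta)_+\precsim d$.

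With these in hand, for $m$-almost divisibility of $B$: given $a\in M_\infty(B)_+$, $k$, and $\varepsilon$, apply $m$-almost divisibility of $A$ to $a$, $k$, $\varepsilon/2$ to obtain $b\in M_\infty(A)_+$ with $k\langle b\rangle\le\langle a\rangle$ and $\langle(a-\varepsilon/2)_+\rangle\le N\langle b\rangle$, where $N=(k+1)(m+1)$. From $(a-\varepsilon/2)_+\precsim b^{\oplus N}$ and the R{\o}rdam cut-down lemma ($a'\precsim b'\Rightarrow\forall\eta\,\exists\delta:(a'-\eta)_+\precsim(b'-\delta)_+$) applied with $\eta=\varepsilon/2$ and the identity $((a-\varepsilon/2)_+-\varepsilon/2)_+=(a-\varepsilon)_+$, I get $\delta>0$ with $(a-\varepsilon)_+\precsim((b-\delta)_+)^{\oplus N}$. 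Since $b\precsim a$ and $a\in M_\infty(B)$, applying (ii) to $(b-\delta/2)_+$ with cut-off $\delta/2$ produces $b'\in M_\infty(B)_+$ with $\langle b'\rangle=\langle(b-\delta)_+\rangle$; then $k\langle b'\rangle\le k\langle b\rangle\le\langle a\rangle$ and $\langle(a-\varepsilon)_+\rangle\le N\langle b'\rangle$, as required. Weak $(m,n)$-divisibility follows the same recipe: apply it in $A$ with tolerance $\varepsilon/2$, use $m x_j\le u$ to see $x_j\le u\in W(B)$, cut down $(u-\varepsilon/2)_+\le\sum_j x_j$ by the R{\o}rdam lemma, and realize each resulting element inside $M_\infty(B)$ via (ii).

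The main obstacle is precisely step (ii) together with the $\varepsilon/\delta$ bookkeeping: the witnessing elements furnished by divisibility of $A$ live a priori only in $M_\infty(A)$, and the work is to move them into $M_\infty(B)$ without destroying either comparison inequality. This is why every inequality must be taken with a small slack ($\varepsilon/2$, $\delta/2$) and then retightened using the cut-down lemma and $((a-\varepsilon_1)_+-\varepsilon_2)_+=(a-(\varepsilon_1+\varepsilon_2))_+$. Everything else is formal, which is presumably why the authors regard the lemma as obvious.
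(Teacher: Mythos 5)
The paper does not actually prove this lemma: it is stated with the remark that it ``is obvious, and we omit the proof,'' so there is no argument to compare yours against line by line. Your write-up is a correct and essentially standard substantiation of the claim. The matrix-algebra half is indeed formal, since both divisibility properties are statements about $(\mathrm{W}(\cdot),\le)$ together with the cut-down operation and ${\rm M}_{\infty}({\rm M}_k(A))_+={\rm M}_{\infty}(A)_+$. For the hereditary half, your two facts are the right ones: (i) is the standard permanence of Cuntz comparison under passage to hereditary subalgebras, and (ii) is the standard ``hereditary realization'' of a cut-down. The only point I would tighten is the justification of (ii): the weak perturbation fact $\|c-d\|<\eta\Rightarrow(c-\eta)_+\precsim d$ gives only a subequivalence $(y-\eta)_+\precsim z^{1/2}r^{*}rz^{1/2}$, which by itself does not yield the inequality $k\langle b'\rangle\le\langle a\rangle$ you need afterwards (you would know $b'\precsim a$ but not $b'\precsim b$). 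You should instead invoke the exact form of R{\o}rdam's lemma, namely that $\|c-d\|<\eta$ implies $(c-\eta)_+=sds^{*}$ for some $s$, whence $(y-\eta)_+=(sr)z(sr)^{*}\sim z^{1/2}(sr)^{*}(sr)z^{1/2}\in \overline{z\,{\rm M}_\infty(A)\,z}\subseteq{\rm M}_\infty(B)$, giving genuine Cuntz equivalence as your statement of (ii) asserts. With that substitution the $\varepsilon/2$--$\delta$ bookkeeping you describe goes through for both properties, and your proof supplies exactly the content the paper leaves implicit.
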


\section{The main results}
\begin{theorem}\label{thm:3.1}
Let $\mathcal{P}$ be a class of unital
${\rm C^*}$-algebras which are $m$-almost divisible.  Let $A$ be a unital separable simple ${\rm C^*}$-algebra. If  $A$ is asymptotically tracially in $\mathcal{P}$, then $A$ is  $m$-almost divisible.
\end{theorem}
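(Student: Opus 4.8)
The plan is to reduce to a single matrix level and then transport the divisibility of $\alpha(a)$ inside $B$ back to $A$ along the maps $\beta_n$. First I would reduce to the case $a\in A_+$. Given $a\in {\rm M}_\infty(A)_+$, say $a\in {\rm M}_r(A)_+$, Lemma \ref{lem:2.7} shows the class of $m$-almost divisible algebras is closed under tensoring with matrix algebras and under passing to unital hereditary subalgebras, so Lemma \ref{lem:2.6} applies and ${\rm M}_r(A)$ is again simple, unital and asymptotically tracially in $\mathcal{P}$. Hence it suffices to treat $a\in A_+$, working inside ${\rm M}_r(A)$ in place of $A$; the element $b$ produced then lies in ${\rm M}_\infty({\rm M}_r(A))_+={\rm M}_\infty(A)_+$, and the relevant Cuntz comparisons are unchanged. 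So fix $a\in A_+$ with $\|a\|\le 1$, $k\in\mathbb{N}$ and $\varepsilon>0$; if $\|a\|\le\varepsilon$ then $(a-\varepsilon)_+=0$ and $b=0$ works, so assume $(a-\varepsilon)_+\neq 0$.

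Next I would apply the strengthened tracial approximation (Proposition~3.8 of \cite{FL}, stated above) to $F=\{a\}$, a tolerance $\delta$ to be specified, and a nonzero positive element $c$ chosen small in the Cuntz sense (this is where simplicity of $A$ enters). This yields $B\in\mathcal{P}$, a unital completely positive $\alpha\colon A\to B$, and sequences $\beta_n,\gamma_n$ with $\beta_n(1_B),\gamma_n(1_A)$ orthogonal projections summing to $1_A$, with $\|a-\gamma_n(a)-\beta_n(\alpha(a))\|<\delta$, with $\beta_n$ asymptotically multiplicative and isometric, and with $\gamma_n(1_A)\precsim c$. Writing $d:=\beta_n(\alpha(a))$ and $e:=\gamma_n(a)$, the inclusion $\gamma_n(A)\subseteq A\cap\beta_n(B)^{\perp}$ gives $de=0$, while $a\le 1_A$ gives $e\le\gamma_n(1_A)$, hence $e\precsim c$. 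Inside $B$ I would invoke $m$-almost divisibility of $B$ for $\alpha(a)$, with the given $k$ and a small $\eta$, producing $b_0\in{\rm M}_\infty(B)_+$ with $k\langle b_0\rangle\le\langle\alpha(a)\rangle$ and $\langle(\alpha(a)-\eta)_+\rangle\le (k+1)(m+1)\langle b_0\rangle$, and I would set $b:=(\beta_n(b_0)-\sigma)_+$ for $n$ large and $\sigma>0$ small (amplifying $\beta_n$ to matrices). Transporting inequalities from $B$ to $A$ rests on the standard fact that an asymptotically multiplicative, asymptotically isometric completely positive contraction approximately preserves the continuous functional calculus and the relation $\precsim$, together with R{\o}rdam's lemma converting $x\precsim y$ into $(x-s)_+\precsim (y-t)_+$; these turn the cut estimates into honest Cuntz inequalities for the fixed $b$.

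Carrying this out, the first inequality is the easier one: from $k\langle b_0\rangle\le\langle\alpha(a)\rangle$ I obtain $b^{\oplus k}=(\beta_n(b_0)^{\oplus k}-\sigma)_+\precsim (d-\delta_1)_+$, and since $d\le d+e$ and $\|a-(d+e)\|<\delta$ give $(d-\delta_1)_+\precsim a$ for $\delta_1\ge\delta$, I conclude $k\langle b\rangle\le\langle a\rangle$. For the second inequality, the norm estimate together with $de=0$ gives $(a-\varepsilon)_+\precsim (d-\eta')_+\oplus (e-\eta')_+$ once $\varepsilon$ exceeds the accumulated tolerances, while transporting $\langle(\alpha(a)-\eta)_+\rangle\le (k+1)(m+1)\langle b_0\rangle$ yields $\langle(d-\eta')_+\rangle\le (k+1)(m+1)\langle b\rangle$.

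The hard part will be the leftover summand $(e-\eta')_+\precsim e\precsim c$ coming from the $\gamma_n$-leg: the multiplicity $(k+1)(m+1)$ on the right is already matched by $(d-\eta')_+$, so there is no room to also absorb this orthogonal error, and in a general Cuntz semigroup one cannot simply cancel it. Reconciling this with the fact that $c$ must be chosen \emph{before} $B$, $\alpha$ and $b$ are produced is the delicate point of the argument. I would handle it by exploiting simplicity of $A$ and the orthogonality of $b\in\beta_n(B)$ to $\gamma_n(1_A)\in\beta_n(B)^{\perp}$ to fix $c$ small enough that the contribution of $e$ is swallowed by the slack already present in the $\varepsilon$- and $\eta$-cuts of the divisibility of $\alpha(a)$ in $B$, rather than demanding a further copy of $b$. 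Once this absorption is in place, both inequalities hold for the single element $b\in{\rm M}_\infty(A)_+$, which is exactly what $m$-almost divisibility of $A$ requires.
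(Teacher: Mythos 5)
Your first half — one application of the tracial approximation to $F=\{a\}$, divisibility of $\alpha(a)$ in $B$, and transport of the two Cuntz inequalities along the asymptotically multiplicative maps $\beta_n$ — is exactly how the paper's proof begins, and the reduction to $a\in A_+$ via Lemma \ref{lem:2.6} and Lemma \ref{lem:2.7} is fine. But you have correctly located the crux and then not resolved it. The mechanism you propose for the leftover corner, namely ``fix $c$ small enough that the contribution of $e=\gamma_n(a)$ is swallowed by the slack already present in the $\varepsilon$- and $\eta$-cuts of the divisibility of $\alpha(a)$ in $B$,'' is not an argument in a general Cuntz semigroup. First, the ``slack'' between $\langle(\alpha(a)-\eta)_+\rangle$ and $(k+1)(m+1)\langle b_0\rangle$ depends on $B$ and $b_0$, which are produced only after $c$ has been handed to the approximation property, and without strict comparison there is no way to guarantee that a pre-assigned nonzero $c$ sits under that gap; this is precisely the circularity you flag. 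Second, the gap can be genuinely zero: if $(\alpha(a)-\eta)_+$ and $(k+1)(m+1)b_0$ are Cuntz-equivalent (for instance both projections), there is no slack at all, so no choice of $c$ rescues the single-stage argument. Cutting by functional calculus does not dominate a prescribed orthogonal positive element absent extra comparison hypotheses.

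The paper closes this gap with a genuinely different device: a \emph{second} application of the tracial approximation, applied to the hereditary corner $E=\gamma_n(1_A)A\gamma_n(1_A)$ \emph{after} the first stage has produced $B$ and $b_1$. Using Theorem 2.1 of \cite{EFF} it first manufactures, inside $B$, a nonzero element $c$ with $(\alpha(a)-\varepsilon')_++c\precsim(k+1)(m+1)b_1$ (this forces the case division according to whether $(\alpha(a)-\varepsilon')_+$ is Cuntz-equivalent to a projection and whether the divisibility inequality is an equality). The second approximation of $E$ is then invoked with its small element equal to $\beta_n(c)$, so its own leftover $\gamma_n'(1_E)$ is subequivalent to $\beta_n(c)$ and fits exactly into the gap, while the bulk $\gamma_n(a)$ of the corner is handled by divisibility in the second algebra $D$, producing a second element $b_2$. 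The final witness is the direct sum $(\beta_n(b_1)-2\varepsilon')_+\oplus(\beta_n'(b_2)-2\varepsilon')_+$, not a single element coming from $B$ as in your proposal. Without this two-stage structure (or some substitute for it), your argument does not yield $(a-\varepsilon)_+\precsim(k+1)(m+1)\langle b\rangle$, so the proof as proposed is incomplete.
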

\begin{proof} We need to show that there is $b\in {\rm M}_{\infty}(A)_+$  such that  $kb\precsim a$  and $(a-\varepsilon)_+\precsim (k+1)(m+1)b$
 for any $a\in A_+,$  $\varepsilon>0$ and $k\in \mathbb{N}.$
  We may assume that $\|a\|=1.$

With  $F=\{a\},$   any $\varepsilon'>0$ with $\varepsilon'$ sufficiently small,  since $A$ is asymptotically tracially in $\mathcal{P}$, there exist
a ${\rm C^*}$-algebra $B$  in $\mathcal{P}$ and completely positive  contractive linear maps  $\alpha:A\to B$ and  $\beta_n: B\to A$, and $\gamma_n:A\to A\cap\beta_n(B)^{\perp}$ such that

$(1)$ the map $\alpha$ is unital  completely positive   linear map, $\beta_n(1_B)$ and $\gamma_n(1_A)$ are all projections, and  $\beta_n(1_B)+\gamma_n(1_A)=1_A$ for all $n\in \mathbb{N}$,

$(2)$ $\|x-\gamma_n(x)-\beta_n(\alpha(x))\|<\varepsilon'$ for all $x\in F$ and for all $n\in {\mathbb{N}}$,

$(3)$ $\alpha$ is an $F$-$\varepsilon'$-approximate embedding,

$(4)$ $\lim_{n\to \infty}\|\beta_n(xy)-\beta_n(x)\beta_n(y)\|=0$ and $\lim_{n\to \infty}\|\beta_n(x)\|=\|x\|$ for all $x,y\in B$.

 Since  $B$  is   $m$-almost divisible, and $\alpha(a)\in B$,  there exists  $b_1\in B$ such that $kb_1\precsim \alpha(a)$
 and $(\alpha(a)-\varepsilon')_+\precsim (k+1)(m+1)b_1$.

 We  divide the proof into two cases.

\textbf{Case (1)}, we assume that $(\alpha(a)-\varepsilon')_+$ is Cuntz equivalent to a projection.

\textbf{(1.1)}, If  $(\alpha(a)-\varepsilon')_+$  is not Cuntz equivalent to $(k+1)(m+1)b_1$.

By Theorem 2.1 (2) in \cite{EFF}, we may assume that there exist non-zero $c\in B_+$ such that $(\alpha(a)-\varepsilon')_++ c\precsim (k+1)(m+1)b_1$.

Since $kb_1\precsim \alpha(a)$, there exist $v\in M_k(B)_+$ such that
$$\|v^*diag(\alpha(a), 0\otimes 1_{k-1})v-b_1\otimes 1_k\|<\bar{\varepsilon}.$$
We assume that $\|v\|\leq M(\bar{\varepsilon})$, by $(4)$, there exists sufficiently large integer $N_1$ such that for any $n>N_1$, we have  $$\|\beta_n\otimes id_{M_{k}}(v^*)diag(\beta_n\alpha(a), 0\otimes 1_{k-1})\beta_n\otimes id_{M_{k}}(v)-\beta_n(b_1)\otimes 1_k\|<\varepsilon'.$$
Therefore we have $$k(\beta_n(b_1)-4\varepsilon')_+\precsim (\beta_n\alpha(a)-2\varepsilon')_+.$$

Since $(\alpha(a)-\varepsilon')_++ c\precsim (k+1)(m+1)b_1$, there exist $w\in M_{(k+1)(m+1)}(B)_+$ such that
$$\|w^*(b_1\otimes 1_{(k+1))m+1)}w-diag((\alpha(a)-\varepsilon')_++ c, 0\otimes 1_{(k+1)(m+1)-1)})\|<\bar{\varepsilon}.$$
We assume that $\|w\|\leq N(\bar{\varepsilon})$, by $(4)$, there exists sufficiently large integer $N_2$ such that for any $N_2<n$ we have  $$\|\beta_n\otimes id_{M_{(k+1)(m+1)}}(w^*)\beta_n(b_1)\otimes 1_{(k+1)(m+1)}\beta_n\otimes id_{M_{(k+1)(m+1)}}(w)$$$$-diag(\beta_n\alpha((a)-\varepsilon')_++ c, 0\otimes 1_{(k+1)(m+1)-1})\|<\varepsilon'.$$
 Therefore we have $$(\beta_n\alpha(a)-6\varepsilon')_++c\precsim (\beta_n(b_1)-2\varepsilon')_+.$$

For sufficiently large $n>max\{N_1,N_2\}$, with  $F=\{\gamma_n(a)\},$   any $\varepsilon''>0$ with $\varepsilon''$ sufficiently small,  let $E=\gamma_n(1_A)A\gamma_n(1_A)$, by Lemma 2.7, $E$ is  asymptotically tracially in $\mathcal{P}$, there exist
a ${\rm C^*}$-algebra $D$  in $\mathcal{P}$ and completely positive  contractive linear maps  $\alpha':E\to D$ and  $\beta_n': D\to E$, and $\gamma_n':E\to E\cap\beta_n'(D)^{\perp}$ such that

$(1)'$ the map $\alpha'$ is unital  completely positive   linear map, $\beta_n'(1_D)$ and $\gamma_n'(1_E)$ are all projections, $\beta_n'(1_D)+\gamma_n'(1_E)=1_E$ for all $n\in \mathbb{N}$,

$(2)'$ $\|x-\gamma_n'(x)-\beta_n'(\alpha'(x))\|<\varepsilon''$ for all $x\in G$ and for all $n\in {\mathbb{N}}$,

$(3)'$ $\alpha'$ is an $G$-$\varepsilon''$-approximate embedding,

$(4)'$ $\lim_{n\to \infty}\|\beta_n'(xy)-\beta_n'(x)\beta_n(y)\|=0$ and $\lim_{n\to \infty}\|\beta_n'(x)\|=\|x\|$ for all $x,y\in D$, and

$(5)'$ $\gamma_n'\gamma(1_E)\precsim \beta_n(c)$ for all $n\in \mathbb{N}$.

   Since  $D$ is $m$-almost divisible, and $(\alpha'\gamma_n(a)-3\varepsilon')_+\in D$,  there exists  $b_2\in D_+$ such that $kb_2\precsim (\alpha'\gamma_n(a)-3\varepsilon')_+$
 and $(\alpha'\gamma_n(a)-4\varepsilon')_+\precsim (k+1)(m+1)b_2$.

 With the same argument as above we have
  $$k(\beta_n'(b_2)-2\varepsilon')_+\precsim (\beta_n'\alpha'\gamma_n(a)-\varepsilon')_+$$ and
  $$(\beta_n'\alpha'\gamma_n(a)-6\varepsilon')_+\precsim (\beta_n'(b_2)-2\varepsilon')_+.$$

 Therefore we have
\begin{eqnarray}
\label{Eq:eq1}
  &&k((\beta_n(b_1)-2\varepsilon')_+\oplus (\beta_n'(b_2)-2\varepsilon')_+)
  \thicksim k((\beta_n(b_1)-2\varepsilon')_++ (\beta_n'(b_2)-2\varepsilon')_+)\nonumber\\
  &&\precsim (\beta_n\alpha(a)-\varepsilon')_+\oplus(\beta_n'\alpha'\gamma_n(a)-\varepsilon')_+ \nonumber\\
   &&\precsim (\beta_n\alpha(a)-\varepsilon')_+\oplus(\beta_n'\alpha'\gamma_n(a)-\varepsilon')_+
   +(\gamma_n'\gamma_n(a)-3\varepsilon)_+ \nonumber\\
  &&\precsim a,\nonumber
\end{eqnarray}
and we also have
\begin{eqnarray}
\label{Eq:eq1}
  &&(a-\varepsilon)_+\nonumber\\
  &&\precsim  (\beta_n\alpha(a)-4\varepsilon')_+\oplus (\beta_n'\alpha'\gamma_n(a)-6\varepsilon')_+ \oplus (\gamma_n'\gamma_n(a)-4\varepsilon')_+\nonumber\\
 &&\precsim  (\beta_n\alpha(a)-4\varepsilon')_+\oplus (\beta_n'\alpha'\gamma_n(a)-4\varepsilon')_+ \oplus \gamma_n'\gamma_n(1_E)\nonumber\\
&&\precsim (\beta_n\alpha(a)-4\varepsilon')_+\oplus (\beta_n'\alpha'\gamma_n(a)-4\varepsilon)_+ \oplus \beta_n(c)\nonumber\\
&&\precsim (k+1)(m+1)(\beta_n(b_1)-2\varepsilon')_+\oplus (k+1)(m+1)(\beta_n(b_1)-2\varepsilon')_+.\nonumber\\
&&\precsim (k+1)(m+1)(\beta_n(b_1)-2\varepsilon')_+\oplus (k+1)(m+1)(\beta_n(b_1)-2\varepsilon')_+.\nonumber
\end{eqnarray}

\textbf{(1.2)},
If $(\alpha(a)-\varepsilon')_+$   is  Cuntz equivalent to $(k+1)(m+1)b_1$.

Since $k(b_1\oplus b_1)\precsim \alpha(a)$, there exist $v\in M_k(B)_+$ such that
$$\|v^*diag(\alpha(a), 0\otimes 1_{2k-1})v-b_1\otimes 1_{2k}\|<\bar{\varepsilon}.$$
We assume that $\|v\|\leq M(\bar{\varepsilon})$, by $(4)$, there exists sufficiently large integer $N_1$  such that for any $n>N_1$, we have  $$\|\beta_n\otimes id_{M_{2k}}(v^*)diag(\beta_n\alpha(a), 0\otimes 1_{k-1}\beta_n\otimes id_{M_{2k}}(v)-\beta_n(b_1)\otimes 1_{2k}\|<\varepsilon'.$$
Therefore we have $$k(\beta_n(b_1)\oplus\beta_n(b_1)-4\varepsilon')_+\precsim (\beta_n\alpha(a)-2\varepsilon')_+.$$

With the same argument, we have
 $$(\beta_n\alpha(a)-6\varepsilon')_++c\precsim (\beta_n(b_1)-2\varepsilon')_+.$$

For sufficiently large $n>max\{N_1,N_2\}$, with  $F=\{\gamma_n(a)\},$   any $\varepsilon''>0$ with $\varepsilon''$ sufficiently small,  let $E=\gamma_n(1_A)A\gamma_n(1_A)$, by Lemma 2.7, $E$ is  asymptotically tracially in $\mathcal{P}$, there exist
a ${\rm C^*}$-algebra $D$  in $\mathcal{P}$ and completely positive  contractive linear maps  $\alpha':E\to D$ and  $\beta_n': D\to E$, and $\gamma_n':E\to E\cap\beta_n'(D)^{\perp}$ such that

$(1)'$ the map $\alpha'$ is unital  completely positive   linear map, $\beta_n'(1_D)$ and $\gamma_n'(1_E)$ are all projections, $\beta_n'(1_D)+\gamma_n'(1_E)=1_E$ for all $n\in \mathbb{N}$,

$(2)'$ $\|x-\gamma_n'(x)-\beta_n'(\alpha'(x))\|<\varepsilon''$ for all $x\in G$ and for all $n\in {\mathbb{N}}$,

$(3)'$ $\alpha'$ is an $G$-$\varepsilon''$-approximate embedding,

$(4)'$ $\lim_{n\to \infty}\|\beta_n'(xy)-\beta_n'(x)\beta_n(y)\|=0$ and $\lim_{n\to \infty}\|\beta_n'(x)\|=\|x\|$ for all $x,y\in D$, and

$(5)'$ $\gamma_n'\gamma(1_A)\precsim \beta_n(b_1)$ for all $n\in \mathbb{N}$.

 Since  $D$ is   $m$-almost divisible, and $(\beta_n'\alpha'\gamma_n(a)-\varepsilon')_+\in B$,  there exists  $b_2\in D_+$ such that $kb_2\precsim (\beta_n'\alpha'\gamma_n(a)-\varepsilon')_+$
 and $(\beta_n'\alpha'\gamma_n(a)-2\varepsilon')_+\precsim (k+1)(m+1)b_2$.

  With the same argument as above we have
  $$k(\beta_n'(b_2)-2\varepsilon')_+\precsim (\beta_n'\alpha'\gamma_n(a)-\varepsilon')_+$$ and
  $$(\beta_n'\alpha'\gamma_n(a)-6\varepsilon')_+\precsim (\beta_n'((b_1-2\varepsilon')_+).$$

  Therefore we have
\begin{eqnarray}
\label{Eq:eq1}
  &&k((\beta_n(b_1\oplus b_1)-2\varepsilon')_+\oplus (\beta_n'(b_2)-2\varepsilon')_+)
  \thicksim k((\beta_n(b_1)-2\varepsilon')_++ (\beta_n'(b_2)-2\varepsilon')_+)\nonumber\\
  &&\precsim (\beta_n\alpha(a)-\varepsilon')_+\oplus(\beta_n'\alpha'\gamma_n(a)-\varepsilon')_+ \nonumber\\
   &&\precsim (\beta_n\alpha(a)-\varepsilon')_+\oplus(\beta_n'\alpha'\gamma_n(a)-\varepsilon')_+
   +(\gamma_n'\gamma_n(a)-3\varepsilon)_+ \nonumber\\
  &&\precsim a,\nonumber
\end{eqnarray}
and we also have
\begin{eqnarray}
\label{Eq:eq1}
  &&(a-\varepsilon)_+\nonumber\\
  &&\precsim  (\beta_n\alpha(a)-4\varepsilon')_+\oplus (\beta_n'\alpha'\gamma_n(a)-6\varepsilon')_+ \oplus (\gamma_n'\gamma_n(a)-4\varepsilon')_+\nonumber\\
 &&\precsim  (\beta_n\alpha(a)-4\varepsilon')_+\oplus (\beta_n'\alpha'\gamma_n(a)-4\varepsilon')_+ \oplus \gamma_n'\gamma_n(1_E)\nonumber\\
&&\precsim (\beta_n\alpha(a)-4\varepsilon')_+\oplus (\beta_n'\alpha'\gamma_n(a)-4\varepsilon)_+ \oplus \beta_n(c)\nonumber\\
&&\precsim (k+1)(m+1)(\beta_n(b_1)-2\varepsilon')_+\oplus (k+1)(m+1)(\beta_n(b_1)-2\varepsilon')_+.\nonumber\\
&&\precsim (k+1)(m+1)(\beta_n(b_1)-2\varepsilon')_+\oplus (k+1)(m+1)(\beta_n(b_1\oplus b_1)-2\varepsilon')_+.\nonumber
\end{eqnarray}

 \textbf{Case (2)}, we assume that $(\alpha(a)-\varepsilon')_+$ is not Cuntz equivalent to a projection.

 By Theorem 2.1 (4) in \cite{EFF}, there is a non-zero positive element $d$  such that
 $(\alpha(a)-2\varepsilon')_++d\precsim (\alpha(a)-\varepsilon')_+$.

Since $(\alpha(a)-\varepsilon')_++ c\precsim (k+1)(m+1)b_1$, there exist $w\in M_{(k+1)(m+1)}(B)_+$ such that
$$\|w^*b_1\otimes 1_{(k+1)(m+1)}w-diag(\alpha(a)-\varepsilon')_++ c, 0\otimes 1_{(k+1)(m+1)-1}\|<\bar{\varepsilon}.$$
We assume that $\|w\|\leq N(\bar{\varepsilon})$, by $(4)$, there exists sufficiently large integer $n$ such that $$\|\beta_n(w^*)\beta_n(b_1)\otimes 1_{(k+1)(m+1)}\beta_n(w)-diag(\beta_n\alpha((a)-\varepsilon')_++ c), 0\otimes 1_{(k+1)(m+1)}\|<\bar{\varepsilon}.$$
 Therefore we have $$(\beta_n\alpha(a)-6\varepsilon')_++c)\precsim (\beta_n((b_1-2\varepsilon')_+).$$

Since $kb_1\precsim \alpha(a)$, there exist $v\in M_k(B)_+$ such that
$$\|v^*diag(\alpha(a), 0\otimes 1_{k-1})v-b_1\otimes 1_k\|<\bar{\varepsilon}.$$
We assume that $\|v\|\leq M(\bar{\varepsilon})$, by $(4)$, there exists sufficiently large integer $N_1$ such that for any $n>N_1$, we have  $$\|\beta_n\otimes id_{M_{k}}(v^*)diag(\beta_n\alpha(a), 0\otimes 1_{k-1})\beta_n\otimes id_{M_{k}}(v)-\beta_n(b_1)\otimes 1_k\|<\varepsilon'.$$
Therefore we have $$k(\beta_n(b_1)-4\varepsilon')_+\precsim (\beta_n\alpha(a)-2\varepsilon')_+.$$

For sufficiently large $n>max\{N_1,N_2\}$, with  $F=\{\gamma_n(a)\},$   any $\varepsilon''>0$ with $\varepsilon''$ sufficiently small,  let $E=\gamma_n(1_A)A\gamma_n(1_A)$, by Lemma 2.7, $E$ is  asymptotically tracially in $\mathcal{P}$, there exist
a ${\rm C^*}$-algebra $D$  in $\mathcal{P}$ and completely positive  contractive linear maps  $\alpha':E\to D$ and  $\beta_n': D\to E$, and $\gamma_n':E\to E\cap\beta_n'(D)^{\perp}$ such that

$(1)'$ the map $\alpha'$ is unital  completely positive   linear map, $\beta_n'(1_D)$ and $\gamma_n'(1_E)$ are all projections, $\beta_n'(1_D)+\gamma_n'(1_E)=1_E$ for all $n\in \mathbb{N}$,

$(2)'$ $\|x-\gamma_n'(x)-\beta_n'(\alpha'(x))\|<\varepsilon''$ for all $x\in G$ and for all $n\in {\mathbb{N}}$,

$(3)'$ $\alpha'$ is an $G$-$\varepsilon''$-approximate embedding,

$(4)'$ $\lim_{n\to \infty}\|\beta_n'(xy)-\beta_n'(x)\beta_n(y)\|=0$ and $\lim_{n\to \infty}\|\beta_n'(x)\|=\|x\|$ for all $x,y\in D$, and

$(5)'$ $\gamma_n'\gamma(1_A)\precsim \beta_n(b_1)$ for all $n\in \mathbb{N}$.

 Since  $D$ is   $m$-almost divisible, and $(\beta_n'\alpha'\gamma_n(a)-\varepsilon')_+\in B$,  there exists  $b_2\in D_+$ such that $kb_2\precsim (\beta_n'\alpha'\gamma_n(a)-\varepsilon')_+$
 and $(\beta_n'\alpha'\gamma_n(a)-2\varepsilon')_+\precsim (k+1)(m+1)b_2$.

  With the same argument as above we have
  $$k(\beta_n'(b_2)-2\varepsilon')_+\precsim (\beta_n'\alpha'\gamma_n(a)-\varepsilon')_+$$ and
  $$(\beta_n'\alpha'\gamma_n(a)-6\varepsilon')_+\precsim (\beta_n'((b_1-2\varepsilon')_+).$$

  Therefore we have
\begin{eqnarray}
\label{Eq:eq1}
  &&k((\beta_n(b_1\oplus b_1)-2\varepsilon')_+\oplus (\beta_n'(b_2)-2\varepsilon')_+)
  \thicksim k((\beta_n(b_1)-2\varepsilon')_++ (\beta_n'(b_2)-2\varepsilon')_+)\nonumber\\
  &&\precsim (\beta_n\alpha(a)-\varepsilon')_+\oplus(\beta_n'\alpha'\gamma_n(a)-\varepsilon')_+ \nonumber\\
   &&\precsim (\beta_n\alpha(a)-\varepsilon')_+\oplus(\beta_n'\alpha'\gamma_n(a)-\varepsilon')_+
   +(\gamma_n'\gamma_n(a)-3\varepsilon)_+ \nonumber\\
  &&\precsim a,\nonumber
\end{eqnarray}
and we also have
\begin{eqnarray}
\label{Eq:eq1}
  &&(a-\varepsilon)_+\nonumber\\
  &&\precsim  (\beta_n\alpha(a)-4\varepsilon')_+\oplus (\beta_n'\alpha'\gamma_n(a)-6\varepsilon')_+ \oplus (\gamma_n'\gamma_n(a)-4\varepsilon')_+\nonumber\\
 &&\precsim  (\beta_n\alpha(a)-4\varepsilon')_+\oplus (\beta_n'\alpha'\gamma_n(a)-4\varepsilon')_+ \oplus \gamma_n'\gamma_n(1_E)\nonumber\\
&&\precsim (\beta_n\alpha(a)-4\varepsilon')_+\oplus (\beta_n'\alpha'\gamma_n(a)-4\varepsilon)_+ \oplus \beta_n(c)\nonumber\\
&&\precsim (k+1)(m+1)(\beta_n(b_1)-2\varepsilon')_+\oplus (k+1)(m+1)(\beta_n(b_1)-2\varepsilon')_+.\nonumber\\
&&\precsim (k+1)(m+1)(\beta_n(b_1)-2\varepsilon')_+\oplus (k+1)(m+1)(\beta_n(b_1\oplus b_1)-2\varepsilon')_+.\nonumber
\end{eqnarray}
\end{proof}

\begin{theorem}\label{thm:3.2}
 Let $\mathcal{P}$ be a class of  unital
${\rm C^*}$-algebras which are   weakly $(m, n)$-divisible.  Let $A$ be a unital separable simple ${\rm C^*}$-algebra. If  $A$ is asymptotically tracially in $\mathcal{P}$, then $A$  is   weakly $(m, n)$-divisible.
\end{theorem}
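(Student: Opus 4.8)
The plan is to imitate the two-level tracial-approximation argument used for Theorem~\ref{thm:3.1}, with the single divisibility witness $b$ replaced throughout by the $n$-tuple of witnesses supplied by weak $(m,n)$-divisibility. As a preliminary reduction, note that an arbitrary class $u\in {\rm W}(A)$ is represented by some $a\in {\rm M}_r(A)_+$; since weak $(m,n)$-divisibility is preserved under tensoring with matrix algebras and under passing to hereditary subalgebras (Lemma~\ref{lem:2.7}), and, by Lemma~\ref{lem:2.6}, matrix amplifications and corners of $A$ remain simple, unital and asymptotically tracially in $\mathcal{P}$, it suffices to produce, for each $a\in A_+$ with $\|a\|=1$ and each $\varepsilon>0$, elements $x_1,\dots,x_n\in {\rm W}(A)$ with $m x_j\leq \langle a\rangle$ for all $j$ and $\langle(a-\varepsilon)_+\rangle\leq x_1+\cdots+x_n$.

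Fixing a sufficiently small $\varepsilon'>0$ and applying the structural approximation of \cite{FL} (Proposition~3.8, recalled above) to $F=\{a\}$, I obtain $B\in\mathcal{P}$ together with $\alpha,\beta_n,\gamma_n$ as there, so that $a$ is approximated by the orthogonal sum $\beta_n\alpha(a)+\gamma_n(a)$, with $\beta_n(1_B)$ and $\gamma_n(1_A)$ complementary projections. Because $B$ is weakly $(m,n)$-divisible and $\alpha(a)\in B_+$, I obtain $y_1,\dots,y_n\in {\rm W}(B)$, with representatives $c_1,\dots,c_n\in {\rm M}_\infty(B)_+$, satisfying $m c_j\precsim \alpha(a)$ for each $j$ and $(\alpha(a)-\varepsilon')_+\precsim c_1\oplus\cdots\oplus c_n$. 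Exactly as in the three cases of the proof of Theorem~\ref{thm:3.1}, I would then split according to whether $(\alpha(a)-\varepsilon')_+$ is Cuntz equivalent to a projection and whether the divisibility bound is attained, invoking Theorem~2.1(2),(4) of \cite{EFF} to produce a nonzero slack element $c\in B_+$ with $(\alpha(a)-\varepsilon')_++c\precsim c_1\oplus\cdots\oplus c_n$ (the slack being extracted from the cut-down itself in the purely positive case).

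Next I push these Cuntz comparisons from $B$ into $A$. Writing each relation in the form $\|v^{*}(\cdots)v-(\cdots)\|<\bar\varepsilon$ and applying property $(4)$ (approximate multiplicativity and asymptotic norm preservation of $\beta_n$) followed by the standard perturbation lemma, for all large $n$ I get, for each $j$, both $m(\beta_n(c_j)-2\varepsilon')_+\precsim(\beta_n\alpha(a)-\varepsilon')_+$ and $(\beta_n\alpha(a)-4\varepsilon')_++\beta_n(c)\precsim\bigoplus_{j=1}^{n}(\beta_n(c_j)-2\varepsilon')_+$. I then recurse on the remainder: the corner $E=\gamma_n(1_A)A\gamma_n(1_A)$ is again simple, unital and asymptotically tracially in $\mathcal{P}$ (Lemmas~\ref{lem:2.6} and \ref{lem:2.7}), so applying Proposition~3.8 to $E$ with $F=\{\gamma_n(a)\}$ and arranging in condition $(5)'$ that $\gamma_n'\gamma_n(1_E)\precsim\beta_n(c)$ gives $D\in\mathcal{P}$ and $\alpha',\beta_n',\gamma_n'$; weak $(m,n)$-divisibility of $D$ applied to $\alpha'\gamma_n(a)$ yields $z_1,\dots,z_n$ with representatives $d_1,\dots,d_n$, which push through $\beta_n'$ to $m(\beta_n'(d_j)-2\varepsilon')_+\precsim(\beta_n'\alpha'\gamma_n(a)-\varepsilon')_+$ and $(\beta_n'\alpha'\gamma_n(a)-4\varepsilon')_+\precsim\bigoplus_{j=1}^{n}(\beta_n'(d_j)-2\varepsilon')_+$. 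Setting $x_j:=\langle(\beta_n(c_j)-2\varepsilon')_+\rangle+\langle(\beta_n'(d_j)-2\varepsilon')_+\rangle$, whose two summands are orthogonal since $\beta_n(B)\subseteq\beta_n(1_B)A\beta_n(1_B)$ while $\beta_n'(D)\subseteq E\subseteq\gamma_n(1_A)A\gamma_n(1_A)$, the first requirement follows from $m x_j\precsim(\beta_n\alpha(a)-\varepsilon')_+\oplus(\beta_n'\alpha'\gamma_n(a)-\varepsilon')_+\precsim\beta_n\alpha(a)+\gamma_n(a)\precsim a$, and the second from decomposing $(a-\varepsilon)_+\precsim(\beta_n\alpha(a)-4\varepsilon')_+\oplus(\beta_n'\alpha'\gamma_n(a)-4\varepsilon')_+\oplus\gamma_n'\gamma_n(1_E)$, replacing the last term by $\beta_n(c)$ via $(5)'$ and feeding it into the slacked inequality to reach $\sum_{j=1}^{n}x_j$.

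The step I expect to be the main obstacle is, just as in Theorem~\ref{thm:3.1}, the tight projection case: when $(\alpha(a)-\varepsilon')_+$ is Cuntz equivalent to a projection and the bound $c_1\oplus\cdots\oplus c_n$ admits no slack, one cannot extract $c$ from \cite{EFF} and must instead route the second remainder through $(5)'$ directly into one of the first-level witnesses, forcing an extra copy (replacing some $c_j$ by $c_j\oplus c_j$) while still keeping $m x_j\precsim a$. Maintaining all $n$ inequalities $m x_j\leq\langle a\rangle$ simultaneously with the single sum estimate $\langle(a-\varepsilon)_+\rangle\leq\sum_j x_j$, across the two orthogonal corners and the accumulated $\varepsilon'$-cut-downs, is the genuine bookkeeping burden of the argument.
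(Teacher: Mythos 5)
Your proposal is correct and follows essentially the same route as the paper's own proof: the same two-level asymptotic tracial approximation (first on $A$ with $F=\{a\}$, then on the corner $E=\gamma_n(1_A)A\gamma_n(1_A)$), the same case division using Theorem 2.1 of \cite{EFF} to extract a slack element when $(\alpha(a)-\varepsilon')_+$ is (or is not) Cuntz equivalent to a projection, the same transfer of Cuntz comparisons through the approximately multiplicative maps $\beta_n$ and $\beta_n'$, and the same assembly of the witnesses $x_j$ as orthogonal sums of contributions from the two corners. The obstacle you single out in the tight projection case is precisely the one the paper addresses by routing the remainder through condition $(5)'$ into a chosen first-level witness.
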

\begin{proof}
We need to show that  for any $a\in {\rm M}_{\infty}(A)_+$, any $\varepsilon>0$,  there exist $x_1,x_2, \cdots$, $x_n \in {\rm M}_{\infty}(A)_+$  such that $x_j\oplus x_j\oplus \cdots \oplus x_j\precsim a$ for all $1\leq j\leq n$, where $x_j$ repeat $m$ times,  and $(a-\varepsilon)_+\precsim \oplus_{i=1}^{n}x_i$.

We may assume $a\in A_+$ and $\|a\|\leq 1$.

With  $F=\{a\},$   any $\varepsilon'>0$ with $\varepsilon'$ sufficiently small,  since $A$ is asymptotically tracially in $\mathcal{P}$, there exist
a ${\rm C^*}$-algebra $B$  in $\mathcal{P}$ and completely positive  contractive linear maps  $\alpha:A\to B$ and  $\beta_n: B\to A$, and $\gamma_n:A\to A\cap\beta_n(B)^{\perp}$ such that

$(1)$ the map $\alpha$ is unital  completely positive   linear map, $\beta_n(1_B)$ and $\gamma_n(1_A)$ are all projections, and  $\beta_n(1_B)+\gamma_n(1_A)=1_A$ for all $n\in \mathbb{N}$,

$(2)$ $\|x-\gamma_n(x)-\beta_n(\alpha(x))\|<\varepsilon'$ for all $x\in F$ and for all $n\in {\mathbb{N}}$,

$(3)$ $\alpha$ is an $F$-$\varepsilon'$-approximate embedding,

$(4)$ $\lim_{n\to \infty}\|\beta_n(xy)-\beta_n(x)\beta_n(y)\|=0$ and $\lim_{n\to \infty}\|\beta_n(x)\|=\|x\|$ for all $x,y\in B$.

  Since $B$ has weakly $(m,n)$-divisible, there exist
 $x_1',x_2', \cdots,x_n' \in {\rm M}_{\infty}(B)_+$ such that $x_j'\oplus x_j'\oplus \cdots \oplus x_j'\precsim \alpha(a)$ where $x_j'$ repeat $m$ times  and $ (\alpha(a)-\varepsilon')_+ \precsim \oplus_{i=1}^{n}x_i'$.

We  divide the proof into two cases.

\textbf{Case (1)},  we assume that $(\alpha(a)-\varepsilon')_+$ is   Cuntz equivalent to a projection.

\textbf{(1.1)}, If  $(\alpha(a)-\varepsilon')_+$  is not Cuntz equivalent to
$\oplus_{i=1}^{n}x_i'$.

By Theorem 2.1 (2) in \cite{EFF}, we may assume that there exist non-zero $c\in B_+$ such that $(\alpha(a)-\varepsilon')_++ c\precsim \oplus_{i=1}^{n}x_i'$.

Since $x_j'\oplus x_j'\oplus \cdots \oplus x_j'\precsim \alpha(a)$, there exist $v\in M_k(B)_+$ such that
$$\|v^*diag(\alpha(a), 0\otimes 1_{m-1})v-x_j'\otimes 1_m\|<\bar{\varepsilon}.$$
We assume that $\|v\|\leq M(\bar{\varepsilon})$, by $(4)$, there exists sufficiently large integer $N_1$ such that for any $n>N_1$, we have  $$\|\beta_n\otimes id_{M_{m}}(v^*)diag(\beta_n\alpha(a), 0\otimes 1_{k-1})\beta_n\otimes id_{M_{k}}(v)-\beta_n(x_j')\otimes 1_m\|<\varepsilon'.$$
Therefore we have $$k(\beta_n(x_j)\oplus\beta_n(x_j')\oplus\cdots\oplus\beta_n(x_j')-4\varepsilon')_+\precsim (\beta_n\alpha(a)-2\varepsilon')_+.$$

Since  $(\alpha(a)-\varepsilon')_++ c\precsim \oplus_{i=1}^{n}x_i'$
 there exist $w\in M_{n}(B)_+$ such that
$$\|w^*(\oplus_{i=1}^{n}x_i')w-diag((\alpha(a)-\varepsilon')_++ c, 0\otimes 1_{n-1})\|<\bar{\varepsilon}.$$
We assume that $\|w\|\leq N(\bar{\varepsilon})$, by $(4)$, there exists sufficiently large integer $N_2$ such that for any $N_2<n$ we have  $$\|\beta_n\otimes id_{M_{n}}(w^*)\beta_n(\oplus_{i=1}^{n}x_i')\otimes 1_{n}\beta_n\otimes id_{M_{n}}(w)$$$$-diag(\beta_n\alpha((a)-\varepsilon')_++ c, 0\otimes 1_{n})\|<\varepsilon'.$$
 Therefore we have $$(\beta_n\alpha(a)-6\varepsilon')_++c\precsim (\beta_n(\oplus_{i=1}^{n}x_i')-2\varepsilon')_+.$$

For sufficiently large $n>max\{N_1,N_2\}$, with  $F=\{\gamma_n(a)\},$   any $\varepsilon''>0$ with $\varepsilon''$ sufficiently small,  let $E=\gamma_n(1_A)A\gamma_n(1_A)$, by Lemma 2.7, $E$ is  asymptotically tracially in $\mathcal{P}$, there exist
a ${\rm C^*}$-algebra $D$  in $\mathcal{P}$ and completely positive  contractive linear maps  $\alpha':E\to D$ and  $\beta_n': D\to E$, and $\gamma_n':E\to E\cap\beta_n'(D)^{\perp}$ such that

$(1)'$ the map $\alpha'$ is unital  completely positive   linear map, $\beta_n'(1_D)$ and $\gamma_n'(1_E)$ are all projections, $\beta_n'(1_D)+\gamma_n'(1_E)=1_E$ for all $n\in \mathbb{N}$,

$(2)'$ $\|x-\gamma_n'(x)-\beta_n'(\alpha'(x))\|<\varepsilon''$ for all $x\in G$ and for all $n\in {\mathbb{N}}$,

$(3)'$ $\alpha'$ is an $G$-$\varepsilon''$-approximate embedding,

$(4)'$ $\lim_{n\to \infty}\|\beta_n'(xy)-\beta_n'(x)\beta_n(y)\|=0$ and $\lim_{n\to \infty}\|\beta_n'(x)\|=\|x\|$ for all $x,y\in D$, and

$(5)'$ $\gamma_n'\gamma(1_E)\precsim \beta_n(c)$ for all $n\in \mathbb{N}$.

   Since  $D$ is   $m$-almost divisible, and $(\alpha'\gamma_n(a)-3\varepsilon')_+\in D$,  there exists  $b_2\in D_+$ such that $kb_2\precsim (\alpha'\gamma_n(a)-3\varepsilon')_+$
 and $(\alpha'\gamma_n(a)-4\varepsilon')_+\precsim (k+1)(m+1)b_2$.

 With the same argument as above we have
  $$k(\beta_n'(\oplus_{i=1}^{n}x_i'')-2\varepsilon')_+\precsim (\beta_n'\alpha'\gamma_n(a)-\varepsilon')_+$$ and
  $$(\beta_n'\alpha'\gamma_n(a)-6\varepsilon')_+\precsim (\beta_n'(\oplus_{i=1}^{n}x_i'')-2\varepsilon')_+.$$

 Therefore we have
\begin{eqnarray}
\label{Eq:eq1}
  &&k((\beta_n(\oplus_{i=1}^{n}x_i')-2\varepsilon')_+\oplus (\beta_n'(\oplus_{i=1}^{n}x_i'')-2\varepsilon')_+)
  \thicksim k((\beta_n(b_1)-2\varepsilon')_++ (\beta_n'(b_2)-2\varepsilon')_+)\nonumber\\
  &&\precsim (\beta_n\alpha(a)-\varepsilon')_+\oplus(\beta_n'\alpha'\gamma_n(a)-\varepsilon')_+ \nonumber\\
   &&\precsim (\beta_n\alpha(a)-\varepsilon')_+\oplus(\beta_n'\alpha'\gamma_n(a)-\varepsilon')_+
   +(\gamma_n'\gamma_n(a)-3\varepsilon)_+ \nonumber\\
  &&\precsim a,\nonumber
\end{eqnarray}
and we also have
\begin{eqnarray}
\label{Eq:eq1}
  &&(a-\varepsilon)_+\nonumber\\
  &&\precsim  (\beta_n\alpha(a)-4\varepsilon')_+\oplus (\beta_n'\alpha'\gamma_n(a)-6\varepsilon')_+ \oplus (\gamma_n'\gamma_n(a)-4\varepsilon')_+\nonumber\\
 &&\precsim  (\beta_n\alpha(a)-4\varepsilon')_+\oplus (\beta_n'\alpha'\gamma_n(a)-4\varepsilon')_+ \oplus \gamma_n'\gamma_n(1_E)\nonumber\\
&&\precsim (\beta_n\alpha(a)-4\varepsilon')_+\oplus (\beta_n'\alpha'\gamma_n(a)-4\varepsilon)_+ \oplus \beta_n(c)\nonumber\\
&&\precsim (k+1)(m+1)(\beta_n(b_1)-2\varepsilon')_+\oplus (k+1)(m+1)(\beta_n(b_1)-2\varepsilon')_+.\nonumber\\
&&\precsim (k+1)(m+1)(\beta_n(\oplus_{i=1}^{n}x_i')-2\varepsilon')_+\oplus (k+1)(m+1)(\beta_n(\oplus_{i=1}^{n}x_i')-2\varepsilon')_+.\nonumber
\end{eqnarray}

\textbf{(1.1)}, If  $(\alpha(a)-\varepsilon')_+$  is not Cuntz equivalent to
$\oplus_{i=1}^{n}x_i'$.

With  $F=\{\gamma_n(a)\},$   any $\varepsilon''>0$ with $\varepsilon'$ sufficiently small,  let $E=\gamma_n(1_A)A\gamma_n(1_A)$, by Lemma 2.7, $E$ is  asymptotically tracially in $\mathcal{P}$, there exist
a ${\rm C^*}$-algebra $D$  in $\mathcal{P}$ and completely positive  contractive linear maps  $\alpha':E\to D$ and  $\beta_n': D\to E$, and $\gamma_n':E\to E\cap\beta_n'(D)^{\perp}$ such that

$(1)'$ the map $\alpha'$ is unital  completely positive   linear map, $\beta_n'(1_D)$ and $\gamma_n'(1_E)$ are all projections, $\beta_n'(1_D)+\gamma_n'(1_E)=1_E$ for all $n\in \mathbb{N}$,

$(2)'$ $\|x-\gamma_n'(x)-\beta_n'(\alpha'(x))\|<\varepsilon''$ for all $x\in G$ and for all $n\in {\mathbb{N}}$,

$(3)'$ $\alpha'$ is an $G$-$\varepsilon''$-approximate embedding,

$(4)'$ $\lim_{n\to \infty}\|\beta_n'(xy)-\beta_n'(x)\beta_n(y)\|=0$ and $\lim_{n\to \infty}\|\beta_n'(x)\|=\|x\|$ for all $x,y\in D$, and

$(5)'$ $\gamma_n'(\gamma(1_A))\precsim \beta_n\alpha(c)$ for all $n\in \mathbb{N}$.

Since $D$ is  weakly $(m,n)$-divisible,
 there exist $x_1'',x_2'', \cdots,x_n'' \in {\rm M}_{\infty}(D)_+$ such that
$x_j''\oplus x_j''\oplus \cdots \oplus x_j''\precsim  (\beta_n'\alpha'\gamma_n(a)-2\varepsilon)_+$ where $x_j''$ repeat $m$ times  and $(\beta_n'\alpha'\gamma_n(a)-3\varepsilon)_+\precsim \oplus_{i=1}^{n}x_i''$.

 With the same argument as above we have
  $$k(\beta_n'(b_2)-2\varepsilon')_+\precsim (\beta_n'\alpha'\gamma_n(a)-\varepsilon')_+$$ and
  $$(\beta_n'\alpha'\gamma_n(a)-6\varepsilon')_+\precsim (\beta_n'((b_1-2\varepsilon')_+).$$

 Therefore we have
\begin{eqnarray}
\label{Eq:eq1}
  &&((x_j'\oplus r)\oplus x_j''))\oplus((x_j'\oplus r)\oplus x_j''))\oplus\cdots \oplus((x_j'\oplus r)\oplus x_j''))\nonumber\\
  &&\precsim  (\beta_n\alpha(a)-2\varepsilon)_+\oplus  (\beta_n'\alpha'\gamma_n(a)-2\varepsilon)_+\nonumber\\
&&\precsim a,\nonumber
\end{eqnarray}
and
\begin{eqnarray}
\label{Eq:eq1}
  &&(x_i'\oplus x_i'')\oplus(x_i'\oplus x_i'')\oplus\cdots \oplus(x_i'\oplus x_i'')\nonumber\\
 &&\precsim  (\beta_n\alpha(a)-2\varepsilon)_+\oplus  (\beta_n'\alpha'\gamma_n(a)-2\varepsilon)_+\nonumber\\
&&\precsim a,\nonumber
\end{eqnarray}
for all $i\neq j$ and $1\leq i\leq n$ where $(x_i'\oplus x_i'')$ repeat $m$ times.

 We also have
\begin{eqnarray}
\label{Eq:eq1}
  &&(a-20\varepsilon)_+\nonumber\\
  &&\precsim (\beta_n\alpha(a)-3\varepsilon)_+\oplus (\beta_n'\alpha'\gamma_n(a)-3\varepsilon)_+\oplus (\gamma_n'\gamma_n(a)-4\varepsilon)_+\nonumber\\
  &&\precsim (\beta_n\alpha(a)-3\varepsilon)_+\oplus (\beta_n'\alpha'\gamma_n(a)-3\varepsilon)_+\oplus (\gamma_n'\gamma_n(1_A)-\varepsilon)_+\nonumber\\
    &&\precsim (a'-3\varepsilon)_+\oplus (\beta_n'\alpha'\gamma_n(a)-3\varepsilon)_+\oplus r\nonumber\\
 &&\precsim \oplus_{i=1,i\neq j}^{n}(x_i'\oplus x_i'')\oplus ((x_j'\oplus r)\oplus x_j'').\nonumber
\end{eqnarray}

\textbf{(1.1.1.2)}, If  $x_1',x_2', \cdots,x_k' \in {\rm M}_{\infty}(B)_+$ are all projections,
and $(a'-3\varepsilon)_+< \oplus_{i=1}^{k}x_i'$.
 Then there exists a nonzero projection $s$ such that $(a'-3\varepsilon)_+\oplus s \precsim \oplus_{i=1}^{k}x_i'$.

Since $ (\alpha(a)-3\varepsilon)_+ \precsim \oplus_{i=1}^{n}x_i'$, .
Since $kb_1\precsim \alpha(a)$, there exist $v\in M_k(B)_+$ such that
$$\|v^*diag(\alpha(a), 0\otimes 1_{k-1}v-b_1\otimes 1\|<\bar{\varepsilon}.$$
We assume that $\|v\|\leq M(\bar{\varepsilon})$, by $(4)$, there exists sufficiently large integer $n$ such that $$\|\beta_n(v^*)diag(\beta_n\alpha(a), 0\otimes 1_{k-1}\beta_n(v)-\beta_n(b_1)\otimes 1\|<\varepsilon'.$$
Therefore we have $$k(\beta_n(b_1)-2\varepsilon')_+\precsim (\beta_n\alpha(a)-\varepsilon')_+.$$

With  $F=\{\gamma_n(a)\},$   any $\varepsilon'>0$ with $\varepsilon'$ sufficiently small,  let $E=\gamma_n(1_A)A\gamma_n(1_A)$, by Lemma 2.7, $E$ is  asymptotically tracially in $\mathcal{P}$, there exist
a ${\rm C^*}$-algebra $D$  in $\mathcal{P}$ and completely positive  contractive linear maps  $\alpha':E\to D$ and  $\beta_n': D\to E$, and $\gamma_n':E\to E\cap\beta_n'(D)^{\perp}$ such that

$(1)'$ the map $\alpha'$ is unital  completely positive   linear map, $\beta_n'(1_D)$ and $\gamma_n'(1_E)$ are all projections, $\beta_n'(1_D)+\gamma_n'(1_E)=1_E$ for all $n\in \mathbb{N}$,

$(2)'$ $\|x-\gamma_n'(x)-\beta_n'(\alpha'(x))\|<\varepsilon''$ for all $x\in G$ and for all $n\in {\mathbb{N}}$,

$(3)'$ $\alpha'$ is an $G$-$\varepsilon''$-approximate embedding,

$(4)'$ $\lim_{n\to \infty}\|\beta_n'(xy)-\beta_n'(x)\beta_n(y)\|=0$ and $\lim_{n\to \infty}\|\beta_n'(x)\|=\|x\|$ for all $x,y\in D$, and

$(5)'$ $\gamma_n'(\gamma(1_A))\precsim \beta_n\alpha(c)$ for all $n\in \mathbb{N}$.

Since $D$ is  weakly $(m,n)$-divisible,
 there exist $x_1'',x_2'', \cdots,x_n'' \in {\rm M}_{\infty}(D)_+$ such that
$x_j''\oplus x_j''\oplus \cdots \oplus x_j''\precsim  (\beta_n'\alpha'\gamma_n(a)-2\varepsilon)_+$ where $x_j''$ repeat $m$ times  and $(\beta_n'\alpha'\gamma_n(a)-3\varepsilon)_+\precsim \oplus_{i=1}^{n}x_i''$.
With the same argument as above we have
  $$k(\beta_n'(b_2)-2\varepsilon')_+\precsim (\beta_n'\alpha'\gamma_n(a)-\varepsilon')_+$$ and
  $$(\beta_n'\alpha'\gamma_n(a)-6\varepsilon')_+\precsim (\beta_n'((b_1-2\varepsilon')_+).$$

 Therefore we have
\begin{eqnarray}
\label{Eq:eq1}
  &&(x_i'\oplus x_i'')\oplus(x_i'\oplus x_i'')\oplus\cdots \oplus(x_i'\oplus x_i'')\nonumber\\
 &&\precsim  (a'-2\varepsilon)_+\oplus  (\beta_n'\alpha'\gamma_n(a)-2\varepsilon)_+\nonumber\\
&&\precsim a\oplus a,\nonumber
\end{eqnarray}
for  $1\leq i\leq n$ where $(x_i'\oplus x_i'')$ repeat $m$ times.

We also have
\begin{eqnarray}
\label{Eq:eq1}
  &&(a-20\varepsilon)_+\nonumber\\
  &&\precsim (\beta_n\alpha(a)-3\varepsilon)_+\oplus (\beta_n'\alpha'\gamma_n(a)-3\varepsilon)_+\oplus (\gamma_n'\gamma_n(a)-2\varepsilon)_+\nonumber\\
  &&\precsim (\beta_n\alpha(a)-3\varepsilon)_+\oplus (\beta_n'\alpha'\gamma_n(a)-3\varepsilon)_+\oplus (\gamma_n'\gamma_n(1_A)-\varepsilon)_+\nonumber\\
   &&\precsim (\beta_n\alpha(a)-3\varepsilon)_+\oplus (\beta_n'\alpha'\gamma_n(a)-4\varepsilon)_+\oplus s\nonumber\\
 &&\precsim \oplus_{i=1}^{n}(x_i'\oplus x_i'').\nonumber
\end{eqnarray}

\textbf{(1.1.1.3)}, we  assume that there is a purely positive element $x_1'$. Since $(a'-2\varepsilon)_+\precsim \oplus_{i=1}^{n}x_i'$, for any $\varepsilon>0$, there exists $\delta>0$, such that
 $(a'-4\varepsilon)_+\precsim (x_1'-\delta)_+ \oplus_{i=2}^{n}x_i'$,

By Theorem 2.1 (4) in \cite{EFF},  there exists a  nonzero positive element $d$  such that
 $(x_1'-\delta)_++d\precsim x_1'$.

Since $ (\alpha(a)-3\varepsilon)_+ \precsim \oplus_{i=1}^{n}x_i'$, .
Since $kb_1\precsim \alpha(a)$, there exist $v\in M_k(B)_+$ such that
$$\|v^*diag(\alpha(a), 0\otimes 1_{k-1}v-b_1\otimes 1\|<\bar{\varepsilon}.$$
We assume that $\|v\|\leq M(\bar{\varepsilon})$, by $(4)$, there exists sufficiently large integer $n$ such that $$\|\beta_n(v^*)diag(\beta_n\alpha(a), 0\otimes 1_{k-1}\beta_n(v)-\beta_n(b_1)\otimes 1\|<\varepsilon'.$$
Therefore we have $$k(\beta_n(b_1)-2\varepsilon')_+\precsim (\beta_n\alpha(a)-\varepsilon')_+.$$

With  $F=\{\gamma_n(a)\},$   any $\varepsilon'>0$ with $\varepsilon'$ sufficiently small,  let $E=\gamma_n(1_A)A\gamma_n(1_A)$, by Lemma 2.7, $E$ is  asymptotically tracially in $\mathcal{P}$, there exist
a ${\rm C^*}$-algebra $D$  in $\mathcal{P}$ and completely positive  contractive linear maps  $\alpha':E\to D$ and  $\beta_n': D\to E$, and $\gamma_n':E\to E\cap\beta_n'(D)^{\perp}$ such that

$(1)'$ the map $\alpha'$ is unital  completely positive   linear map, $\beta_n'(1_D)$ and $\gamma_n'(1_E)$ are all projections, $\beta_n'(1_D)+\gamma_n'(1_E)=1_E$ for all $n\in \mathbb{N}$,

$(2)'$ $\|x-\gamma_n'(x)-\beta_n'(\alpha'(x))\|<\varepsilon''$ for all $x\in G$ and for all $n\in {\mathbb{N}}$,

$(3)'$ $\alpha'$ is an $G$-$\varepsilon''$-approximate embedding,

$(4)'$ $\lim_{n\to \infty}\|\beta_n'(xy)-\beta_n'(x)\beta_n(y)\|=0$ and $\lim_{n\to \infty}\|\beta_n'(x)\|=\|x\|$ for all $x,y\in D$, and

$(5)'$ $\gamma_n'(\gamma(1_A))\precsim \beta_n\alpha(c)$ for all $n\in \mathbb{N}$.

Since $D$ is weakly $(m,n)$-divisible,
 there exist $x_1'',x_2'', \cdots,x_n'' \in {\rm M}_{\infty}(D)_+$ such that
$x_j''\oplus x_j''\oplus \cdots \oplus x_j''\precsim  (\gamma_n'\gamma_n(a)-2\varepsilon)_+$ where $x_j''$ repeat $m$ times  and $(\gamma_n'\gamma_n(a)-3\varepsilon)_+\precsim \oplus_{i=1}^{n}x_i''$.
With the same argument as above we have
  $$k(\beta_n'(b_2)-2\varepsilon')_+\precsim (\beta_n'\alpha'\gamma_n(a)-\varepsilon')_+$$ and
  $$(\beta_n'\alpha'\gamma_n(a)-6\varepsilon')_+\precsim (\beta_n'((b_1-2\varepsilon')_+).$$

 Therefore we have
\begin{eqnarray}
\label{Eq:eq1}
  &&(x_i'\oplus x_i'')\oplus(x_i'\oplus x_i'')\oplus\cdots \oplus(x_i'\oplus x_i'')\nonumber\\
 &&\precsim  (\beta_n\alpha(a)-2\varepsilon)_+\oplus  (\beta_n'\alpha'\gamma_n(a)-2\varepsilon)_+\nonumber\\
&&\precsim a,\nonumber
\end{eqnarray}
for  $1\leq i\leq n$ where $(x_i'\oplus x_i'')$ repeat $m$ times.

We also have
\begin{eqnarray}
\label{Eq:eq1}
  &&(a-20\varepsilon)_+\nonumber\\
  &&\precsim (\beta_n\alpha(a)-4\varepsilon)_+\oplus (\beta_n'\alpha'\gamma_n(a)-3\varepsilon)_+\oplus (\gamma_n'\gamma_n(a)-2\varepsilon)_+\nonumber\\
  &&\precsim (\beta_n\alpha(a)-4\varepsilon)_+\oplus (\beta_n'\alpha'\gamma_n(a)-3\varepsilon)_+\oplus (\gamma_n'\gamma_n(1_A)-\varepsilon)_+\nonumber\\
    &&\precsim (\beta_n\alpha(a)-4\varepsilon)_+\oplus (\beta_n'\alpha'\gamma_n(a)-4\varepsilon)_+\oplus d\nonumber\\
 &&\precsim \oplus_{i=1}^{n}(x_i'\oplus x_i'').\nonumber
\end{eqnarray}

\textbf{(1.1.2)}  We assume that there exists nonzero projection $r$ such that  $(a'-3\varepsilon)_++r\precsim (a'-2\varepsilon)_+$.

Since $ (\alpha(a)-3\varepsilon)_+ \precsim \oplus_{i=1}^{n}x_i'$, .
Since $kb_1\precsim \alpha(a)$, there exist $v\in M_k(B)_+$ such that
$$\|v^*diag(\alpha(a), 0\otimes 1_{k-1}v-b_1\otimes 1\|<\bar{\varepsilon}.$$
We assume that $\|v\|\leq M(\bar{\varepsilon})$, by $(4)$, there exists sufficiently large integer $n$ such that $$\|\beta_n(v^*)diag(\beta_n\alpha(a), 0\otimes 1_{k-1}\beta_n(v)-\beta_n(b_1)\otimes 1\|<\varepsilon'.$$
Therefore we have $$k(\beta_n(b_1)-2\varepsilon')_+\precsim (\beta_n\alpha(a)-\varepsilon')_+.$$

With  $F=\{\gamma_n(a)\},$   any $\varepsilon'>0$ with $\varepsilon'$ sufficiently small,  let $E=\gamma_n(1_A)A\gamma_n(1_A)$, by Lemma 2.7, $E$ is  asymptotically tracially in $\mathcal{P}$, there exist
a ${\rm C^*}$-algebra $D$  in $\mathcal{P}$ and completely positive  contractive linear maps  $\alpha':E\to D$ and  $\beta_n': D\to E$, and $\gamma_n':E\to E\cap\beta_n'(D)^{\perp}$ such that

$(1)'$ the map $\alpha'$ is unital  completely positive   linear map, $\beta_n'(1_D)$ and $\gamma_n'(1_E)$ are all projections, $\beta_n'(1_D)+\gamma_n'(1_E)=1_E$ for all $n\in \mathbb{N}$,

$(2)'$ $\|x-\gamma_n'(x)-\beta_n'(\alpha'(x))\|<\varepsilon''$ for all $x\in G$ and for all $n\in {\mathbb{N}}$,

$(3)'$ $\alpha'$ is an $G$-$\varepsilon''$-approximate embedding,

$(4)'$ $\lim_{n\to \infty}\|\beta_n'(xy)-\beta_n'(x)\beta_n(y)\|=0$ and $\lim_{n\to \infty}\|\beta_n'(x)\|=\|x\|$ for all $x,y\in D$, and

$(5)'$ $\gamma_n'(\gamma(1_A))\precsim \beta_n\alpha(c)$ for all $n\in \mathbb{N}$.

Since $D$ is weakly $(m,n)$-divisible,
 there exist $x_1'',x_2'', \cdots,x_n'' \in {\rm M}_{\infty}(D)_+$ such that
$x_j''\oplus x_j''\oplus \cdots \oplus x_j''\precsim  (\beta_n'\alpha'\gamma_n(a)-2\varepsilon)_+$ where $x_j''$ repeat $m$ times  and $(\beta_n'\alpha'\gamma_n(a)-3\varepsilon)_+\precsim \oplus_{i=1}^{n}x_i''$.
With the same argument as above we have
  $$k(\beta_n'(b_2)-2\varepsilon')_+\precsim (\beta_n'\alpha'\gamma_n(a)-\varepsilon')_+$$ and
  $$(\beta_n'\alpha'\gamma_n(a)-6\varepsilon')_+\precsim (\beta_n'((b_1-2\varepsilon')_+).$$

 Therefore we have
\begin{eqnarray}
\label{Eq:eq1}
  &&(y_i'\oplus x_i'')\oplus(y_i'\oplus x_i'')\oplus\cdots \oplus(y_i'\oplus x_i'')\nonumber\\
 &&\precsim  (\beta_n\alpha(a)-\varepsilon)_+\oplus  (\beta_n'\alpha'\gamma_n(a)-2\varepsilon)_+\nonumber\\
&&\precsim a,\nonumber
\end{eqnarray}
for  $1\leq i\leq n$ where $(x_i'\oplus x_i'')$ repeat $m$ times.

We also have
\begin{eqnarray}
\label{Eq:eq1}
  &&(a-20\varepsilon)_+\nonumber\\
  &&\precsim (\beta_n\alpha(a)-3\varepsilon)_+\oplus (\beta_n'\alpha'\gamma_n(a)-4\varepsilon)_+\oplus (\gamma_n'\gamma_n(a)-3\varepsilon)_+\nonumber\\
  &&\precsim (\beta_n\alpha(a)-3\varepsilon)_+\oplus (\beta_n'\alpha'\gamma_n(a)-4\varepsilon)_+\oplus (\gamma_n'\gamma_n(1_A)-\varepsilon)_+\nonumber\\
    &&\precsim (\beta_n\alpha(a)-3\varepsilon)_+\oplus (\beta_n'\alpha'\gamma_n(a)-4\varepsilon)_+\oplus r\nonumber\\
  &&\precsim (\beta_n\alpha(a)-2\varepsilon)_+\oplus (\beta_n'\alpha'\gamma_n(a)-4\varepsilon)_+\nonumber\\
 &&\precsim \oplus_{i=1}^{n}(y_i'\oplus x_i'').\nonumber
\end{eqnarray}

\textbf{(1.2)} If  $(a'-3\varepsilon)_+$  is not Cuntz equivalent to a projection.

By Theorem 2.1 (4) in \cite{EFF},  there is a non-zero positive element $d$  such that
 $(a'-4\varepsilon)_++d\precsim (a'-3\varepsilon)_+$.

Since $ (\alpha(a)-3\varepsilon)_+ \precsim \oplus_{i=1}^{n}x_i'$, .
Since $kb_1\precsim \alpha(a)$, there exist $v\in M_k(B)_+$ such that
$$\|v^*diag(\alpha(a), 0\otimes 1_{k-1}v-b_1\otimes 1\|<\bar{\varepsilon}.$$
We assume that $\|v\|\leq M(\bar{\varepsilon})$, by $(4)$, there exists sufficiently large integer $n$ such that $$\|\beta_n(v^*)diag(\beta_n\alpha(a), 0\otimes 1_{k-1}\beta_n(v)-\beta_n(b_1)\otimes 1\|<\varepsilon'.$$
Therefore we have $$k(\beta_n(b_1)-2\varepsilon')_+\precsim (\beta_n\alpha(a)-\varepsilon')_+.$$

With  $F=\{\gamma_n(a)\},$   any $\varepsilon'>0$ with $\varepsilon'$ sufficiently small,  let $E=\gamma_n(1_A)A\gamma_n(1_A)$, by Lemma 2.7, $E$ is  asymptotically tracially in $\mathcal{P}$, there exist
a ${\rm C^*}$-algebra $D$  in $\mathcal{P}$ and completely positive  contractive linear maps  $\alpha':E\to D$ and  $\beta_n': D\to E$, and $\gamma_n':E\to E\cap\beta_n'(D)^{\perp}$ such that

$(1)'$ the map $\alpha'$ is unital  completely positive   linear map, $\beta_n'(1_D)$ and $\gamma_n'(1_E)$ are all projections, $\beta_n'(1_D)+\gamma_n'(1_E)=1_E$ for all $n\in \mathbb{N}$,

$(2)'$ $\|x-\gamma_n'(x)-\beta_n'(\alpha'(x))\|<\varepsilon''$ for all $x\in G$ and for all $n\in {\mathbb{N}}$,

$(3)'$ $\alpha'$ is an $G$-$\varepsilon''$-approximate embedding,

$(4)'$ $\lim_{n\to \infty}\|\beta_n'(xy)-\beta_n'(x)\beta_n(y)\|=0$ and $\lim_{n\to \infty}\|\beta_n'(x)\|=\|x\|$ for all $x,y\in D$, and

$(5)'$ $\gamma_n'(\gamma(1_A))\precsim \beta_n\alpha(c)$ for all $n\in \mathbb{N}$.

Since $D$ is weakly $(m,n)$-divisible,
 there exist $x_1'',x_2'', \cdots,x_n'' \in {\rm M}_{\infty}(D)_+$ such that
$x_j''\oplus x_j''\oplus \cdots \oplus x_j''\precsim  (\beta_n'\alpha'\gamma_n(a)-2\varepsilon)_+$ where $x_j''$ repeat $m$ times  and $(\beta_n'\alpha'\gamma_n(a)-3\varepsilon)_+\precsim \oplus_{i=1}^{n}x_i''$.
With the same argument as above we have
  $$k(\beta_n'(b_2)-2\varepsilon')_+\precsim (\beta_n'\alpha'\gamma_n(a)-\varepsilon')_+$$ and
  $$(\beta_n'\alpha'\gamma_n(a)-6\varepsilon')_+\precsim (\beta_n'((b_1-2\varepsilon')_+).$$

Therefore we have
\begin{eqnarray}
\label{Eq:eq1}
&&(x_j'\oplus x_j''))\oplus(x_j'\oplus x_j''))\oplus\cdots \oplus(x_j'\oplus x_j''))\nonumber\\
   &&\precsim  (a'-2\varepsilon)_+\oplus  (\beta_n'\alpha'\gamma_n(a)-2\varepsilon)_+\nonumber\\
&&\precsim a,\nonumber
\end{eqnarray}

for $1\leq j\leq n$  where $(x_j'\oplus x_j'')$ repeat $m$ times.

 We also have
\begin{eqnarray}
\label{Eq:eq1}
  &&(a-20\varepsilon)_+\nonumber\\
  &&\precsim (\beta_n\alpha(a)-4\varepsilon)_+\oplus (\beta_n'\alpha'\gamma_n(a)-4\varepsilon)_+\oplus (\gamma_n'\gamma_n(a)-2\varepsilon)_+\nonumber\\
  &&\precsim (\beta_n\alpha(a)-4\varepsilon)_+\oplus (\beta_n'\alpha'\gamma_n(a)-4\varepsilon)_+\oplus (\gamma_n'\gamma_n(1_A)-\varepsilon)_+\nonumber\\
    &&\precsim (\beta_n\alpha(a)-4\varepsilon)_+\oplus (\beta_n'\alpha'\gamma_n(a)-4\varepsilon)_+\oplus d\nonumber\\
  &&\precsim (\beta_n\alpha(a)-3\varepsilon)_+\oplus (\beta_n'\alpha'\gamma_n(a)-4\varepsilon)_+\nonumber\\
 &&\precsim \oplus_{i=1}^{n}(x_i'\oplus x_i'').\nonumber
\end{eqnarray}

\textbf{ Case (2)}, If  $(a'-2\varepsilon)_+$  is not Cuntz equivalent to a projection.

By Theorem 2.1 (4) in \cite{EFF},  there is a non-zero positive element $d$  such that
 $(a'-3\varepsilon)_++d\precsim (a'-2\varepsilon)_+$.

With  $F=\{\gamma_n(a)\},$   any $\varepsilon'>0$ with $\varepsilon'$ sufficiently small,  let $E=\gamma_n(1_A)A\gamma_n(1_A)$, by Lemma 2.7, $E$ is  asymptotically tracially in $\mathcal{P}$, there exist
a ${\rm C^*}$-algebra $D$  in $\mathcal{P}$ and completely positive  contractive linear maps  $\alpha':E\to D$ and  $\beta_n': D\to E$, and $\gamma_n':E\to E\cap\beta_n'(D)^{\perp}$ such that

$(1)'$ the map $\alpha'$ is unital  completely positive   linear map, $\beta_n'(1_D)$ and $\gamma_n'(1_E)$ are all projections, $\beta_n'(1_D)+\gamma_n'(1_E)=1_E$ for all $n\in \mathbb{N}$,

$(2)'$ $\|x-\gamma_n'(x)-\beta_n'(\alpha'(x))\|<\varepsilon''$ for all $x\in G$ and for all $n\in {\mathbb{N}}$,

$(3)'$ $\alpha'$ is an $G$-$\varepsilon''$-approximate embedding,

$(4)'$ $\lim_{n\to \infty}\|\beta_n'(xy)-\beta_n'(x)\beta_n(y)\|=0$ and $\lim_{n\to \infty}\|\beta_n'(x)\|=\|x\|$ for all $x,y\in D$, and

$(5)'$ $\gamma_n'(\gamma(1_A))\precsim \beta_n\alpha(c)$ for all $n\in \mathbb{N}$.

Since $D$ is weakly $(m,n)$-divisible,
 there exist $x_1'',x_2'', \cdots,x_n'' \in {\rm M}_{\infty}(D)_+$ such that
$x_j''\oplus x_j''\oplus \cdots \oplus x_j''\precsim  (\beta_n'\alpha'\gamma_n(a)-2\varepsilon)_+$ where $x_j''$ repeat $m$ times  and $(\beta_n'\alpha'\gamma_n(a)-3\varepsilon)_+\precsim \oplus_{i=1}^{n}x_i''$.
With the same argument as above we have
  $$k(\beta_n'(b_2)-2\varepsilon')_+\precsim (\beta_n'\alpha'\gamma_n(a)-\varepsilon')_+$$ and
  $$(\beta_n'\alpha'\gamma_n(a)-6\varepsilon')_+\precsim (\beta_n'((b_1-2\varepsilon')_+).$$

 Therefore we have

\begin{eqnarray}
\label{Eq:eq1}
  &&(y_i'\oplus x_i'')\oplus(y_i'\oplus x_i'')\oplus\cdots \oplus(y_i'\oplus x_i'')\nonumber\\
 &&\precsim  (\beta_n\alpha(a)-2\varepsilon)_+\oplus  (\beta_n'\alpha'\gamma_n(a)-2\varepsilon)_+\nonumber\\
&&\precsim a,\nonumber
\end{eqnarray}
for  $1\leq i\leq n$ where $(x_i'\oplus x_i'')$ repeat $m$ times.

We also have
\begin{eqnarray}
\label{Eq:eq1}
  &&(a-20\varepsilon)_+\nonumber\\
  &&\precsim (\beta_n\alpha(a)-3\varepsilon)_+\oplus (\beta_n'\alpha'\gamma_n(a)-4\varepsilon)_+\oplus (\gamma_n'\gamma_n(a)-2\varepsilon)_+\nonumber\\
  &&\precsim (\beta_n\alpha(a)-3\varepsilon)_+\oplus (\beta_n'\alpha'\gamma_n(a)-4\varepsilon)_+\oplus (\gamma_n'\gamma_n(1_A)-\varepsilon)_+\nonumber\\
    &&\precsim (\beta_n\alpha(a)-3\varepsilon)_+\oplus (\beta_n'\alpha'\gamma_n(a)-4\varepsilon)_+\oplus d\nonumber\\
   &&\precsim (\beta_n\alpha(a)-2\varepsilon)_+\oplus (\beta_n'\alpha'\gamma_n(a)-4\varepsilon)_+\nonumber\\
 &&\precsim \oplus_{i=1}^{n}(y_i'\oplus x_i'').\nonumber
\end{eqnarray}
\end{proof}

 \end{document}